\documentclass{amsart}

\usepackage{amsmath}
\usepackage{amssymb}
\usepackage{bbm}
\usepackage{enumerate}
\usepackage{appendix}
\usepackage{mathrsfs}
\usepackage{hyperref}

\usepackage[T1]{fontenc}
\usepackage[utf8]{inputenc}

\numberwithin{equation}{section}

\setlength{\parindent}{0pt}
\setlength{\marginparwidth}{4cm}
\frenchspacing
\sloppy
\textheight206mm
\textwidth150mm
\hoffset-13mm
\newcommand{\B}{\boldsymbol{B}}

\newcommand{\R}{\mathbb{R}}
\newcommand{\N}{\mathbb{N}}
\newcommand{\Z}{\mathbb{Z}}

\newcommand{\E}{\mathbb{E}}
\newcommand{\pr}{\mathbb{P}}

\newcommand{\e}{\operatorname{e}}
\newcommand{\dd}{\,{\mathrm d}}
\newcommand{\db}{{\mathrm d}}

\newcommand{\im}{\operatorname{i}}
\newcommand{\ind}{\mathbbm{1}}

\newcommand{\eps}{\varepsilon}

\newtheorem{lemma}{Lemma}[section]

\newtheorem{propn}[lemma]{Proposition}
\newtheorem{thm}[lemma]{Theorem}

\newtheorem{defn}[lemma]{Definition}
\newtheorem{remark0}[lemma]{Remark}
\newtheorem{eg0}[lemma]{Example}

\newenvironment{remark}{\begin{remark0}\rm}{\hspace*{\fill} $\square$
                        \end{remark0}}

\author[K. Habermann]{Karen Habermann}
\address{University of Bonn, Hausdorff Center for Mathematics,
  Endenicher Allee 62, 53115 Bonn, Germany.}
\email{habermann@iam.uni-bonn.de}
\thanks{Research supported by the German Research Foundation DFG
  through the Hausdorff Center for Mathematics.}

\subjclass[2010]{60F05, 33C45}
\title{A semicircle law and decorrelation phenomena for iterated
  Kolmogorov loops}
\begin{document}
\begin{abstract}
  We consider a standard one-dimensional Brownian motion on
  the time interval $[0,1]$
  conditioned to have vanishing iterated time integrals up to order
  $N$. We show that the resulting processes can be expressed explicitly
  in terms of shifted Legendre polynomials and the original Brownian
  motion, and we use these representations to prove that the processes
  converge weakly as $N\to\infty$ to the zero process.
  This gives rise to a polynomial decomposition for Brownian motion.
  We further
  study the fluctuation processes obtained through scaling by
  $\sqrt{N}$ and show that they converge in finite dimensional
  distributions as $N\to\infty$ to a collection of independent
  zero-mean Gaussian random variables whose variances follow a scaled
  semicircle. The fluctuation result is a consequence of a limit
  theorem for Legendre polynomials which quantifies their completeness
  and orthogonality property.
  In the proof of the latter, we encounter a Catalan triangle.
\end{abstract}

\maketitle
\thispagestyle{empty}

\section{Introduction}
Let $(B_t)_{t\in[0,1]}$ be a Brownian motion in $\R$, which we assume
is realised as the coordinate process on the path space $\{w\in
C([0,1],\R)\colon w_0=0\}$ under Wiener measure $\pr$. The
stochastic process in $\R^2$ which pairs the standard
one-dimensional Brownian motion $(B_t)_{t\in[0,1]}$
with its time integral is the Kolmogorov diffusion, named after
Kolmogorov~\cite{kolmogorov}. Similarly, pairing Brownian motion
with its iterated time integrals up to some order
gives rise to the iterated Kolmogorov diffusion.
\begin{defn}
  Let $N\in\N$. The stochastic process $(\B_t^N)_{t\in[0,1]}$ in
  $\R^N$ defined by
  \begin{equation*}
    \B_t^N=\left(
      B_t,\int_0^tB_{s_1}\dd s_1,
      \int_0^t\int_0^{s_2}B_{s_1}\dd s_1\dd s_2,\dots,
      \int_0^t\int_0^{s_{N-1}}\dots \int_0^{s_2} B_{s_1}\dd s_1\dots\dd s_{N-1}
    \right)
  \end{equation*}
  is the iterated Kolmogorov diffusion of step $N$.
\end{defn}
In particular, $(\B_t^1)_{t\in[0,1]}$ is simply the Brownian motion
$(B_t)_{t\in[0,1]}$ and $(\B_t^2)_{t\in[0,1]}$ is the associated
Kolmogorov diffusion. Since $(\B_t^N)_{t\in[0,1]}$ is a Gaussian
process for all $N\in\N$, we can make sense of conditioning the
process $(\B_t^N)_{t\in[0,1]}$ on $\B_1^N=0$. Considering the first
component of the resulting process shows the existence of the iterated
Kolmogorov loop of step $N$.
\begin{defn}
  The iterated Kolmogorov loop of step $N\in\N$ is the stochastic process
  in $\R$ obtained by conditioning $(B_t)_{t\in[0,1]}$ on $\B_1^N=0$.
\end{defn}
The terminology is motivated by Baudoin~\cite[Section~3.6]{baudoin_flows}
where Brownian motion in $\R^d$ conditioned to have trivial
truncated signature of order $N$ is called the Brownian loop of step $N$.

We study the iterated Kolmogorov loops of step $N$ in the limit
$N\to\infty$. Our analysis exploits the explicit expression below
for iterated Kolmogorov loops
in terms of shifted Legendre polynomials. For the proof and
further discussions, see Section~\ref{kol_loops}.
\begin{propn}\label{propn:kol_loop_leg}
  Let $Q_n$ be the shifted Legendre polynomial
  of degree $n$ on $[0,1]$. For $N\in\N$,
  the stochastic process $(L_t^N)_{t\in[0,1]}$ in $\R$ defined by
  \begin{equation}\label{expr4L}
    L_t^N=
    B_t-\sum_{n=0}^{N-1}(2n+1)\int_0^tQ_n(r)\dd r \int_0^1Q_n(r)\dd B_r
  \end{equation}
  has the same law as the iterated Kolmogorov loop of step $N$.
\end{propn}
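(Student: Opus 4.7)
The plan is to recognise $L_t^N$ as the residual of the Gaussian projection of $B_t$ onto the finite-dimensional subspace generated by the conditioning constraints, and then to invoke the standard Gaussian conditioning lemma.

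First I would reformulate the event $\{\B_1^N=0\}$ in the Legendre basis. By Fubini's theorem, the $k$th iterated time integral of $B$ at time $1$ equals $\int_0^1(1-s)^k/k!\,\dd B_s$ for $k=0,\dots,N-1$, so $\{\B_1^N=0\}$ coincides with the event that $\int_0^1 p(s)\,\dd B_s=0$ for every polynomial $p$ of degree at most $N-1$, and hence, after changing basis, with
\begin{equation*}
X_n:=\int_0^1 Q_n(s)\,\dd B_s=0,\qquad n=0,1,\dots,N-1.
\end{equation*}
The orthogonality relation $\int_0^1 Q_n(s)Q_m(s)\,\dd s=\delta_{nm}/(2n+1)$ combined with It\^o's isometry implies that $X_0,\dots,X_{N-1}$ are independent centred Gaussians with $\mathrm{Var}(X_n)=1/(2n+1)$.

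Next, It\^o's isometry also gives $\mathrm{Cov}(B_t,X_n)=\int_0^t Q_n(r)\,\dd r$ for every $t\in[0,1]$, so the orthogonal projection of $B_t$ onto $\mathrm{span}(X_0,\dots,X_{N-1})$ in $L^2(\Omega)$ is
\begin{equation*}
\sum_{n=0}^{N-1}\frac{\mathrm{Cov}(B_t,X_n)}{\mathrm{Var}(X_n)}\,X_n
=\sum_{n=0}^{N-1}(2n+1)\int_0^t Q_n(r)\,\dd r\cdot X_n.
\end{equation*}
Subtracting this from $B_t$ reproduces the right-hand side of~\eqref{expr4L}, and a direct covariance computation confirms that $L_t^N$ is uncorrelated — hence, by joint Gaussianity, independent — of every $X_m$, $m=0,\dots,N-1$, uniformly in $t$.

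To conclude, I would invoke the Gaussian conditioning principle: if $(U,V)$ is jointly Gaussian and $W:=U-\E[U\mid V]$, then $W$ is independent of $V$ and the conditional law of $U$ given $V=0$ is the unconditional law of $W$. Applied to all finite-dimensional marginals of $(B_t)_{t\in[0,1]}$ together with the finite-dimensional conditioning vector $(X_0,\dots,X_{N-1})$, this identifies the law of $B$ conditional on $\B_1^N=0$ with the law of $L^N$. The argument is entirely linear-Gaussian, so I do not expect any substantial obstacle; the one step that carries the real content is the change of basis into shifted Legendre polynomials, since it is precisely this choice that diagonalises the projection and turns the implicit subtraction of $\E[B_t\mid\B_1^N]$ into the clean explicit formula~\eqref{expr4L}.
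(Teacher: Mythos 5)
Your proposal is correct and follows essentially the same route as the paper: both re-express the constraint $\B_1^N=0$ through the stochastic integrals $\int_0^1Q_n(r)\dd B_r$ (a change of basis among polynomials of degree at most $N-1$), use the It\^o isometry and Legendre orthogonality to see that $L_t^N$ is uncorrelated with, hence independent of, these constraint variables, and conclude via the standard Gaussian decomposition $B_t=L_t^N+\E[B_t\mid\B_1^N]$. The only cosmetic difference is that you phrase the subtracted term as an explicit orthogonal projection onto the span of the $X_n$, while the paper writes it as a linear combination $\sum_l\beta_l(t)\B_1^{N,l}$ of the components of $\B_1^N$.
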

As a consequence of the completeness and orthogonality of the shifted
Legendre polynomials, we obtain a law of large numbers type
theorem for the iterated Kolmogorov loops. This result can be
rephrased to give a polynomial decomposition of Brownian motion,
cf. Section~\ref{kol_loops}.
\begin{thm}\label{thm:LLN}
  Let $\Omega^{0,0}=\{w\in C([0,1],\R)\colon w_0=w_1=0\}$ be
  the set of continuous loops in $\R$ at zero. The laws of the iterated
  Kolmogorov loops of step $N$ converge weakly on $\Omega^{0,0}$ as
  $N\to\infty$ to the unit mass $\delta_0$ at the zero path.
\end{thm}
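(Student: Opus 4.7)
The plan is to establish weak convergence of the laws of $L^N$ to $\delta_0$ on the path space $C([0,1],\R)$ by the classical combination of finite-dimensional convergence plus tightness, and then observe that all laws are supported on the closed subspace $\Omega^{0,0}$ so that the weak convergence restricts to $\Omega^{0,0}$.

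The first step is to read \eqref{expr4L} as an It\^o integral of a projection residual. Since $\{\sqrt{2n+1}\,Q_n\}_{n\geq 0}$ is an orthonormal basis of $L^2([0,1])$ and the Fourier--Legendre coefficients of $\ind_{[0,t]}$ are precisely $(2n+1)\int_0^tQ_n(r)\dd r$, denoting by $P_N$ the orthogonal projection onto $\mathrm{span}(Q_0,\ldots,Q_{N-1})$ we may rewrite
\[
  L_t^N = \int_0^1 \bigl(\ind_{[0,t]} - P_N\ind_{[0,t]}\bigr)(r)\dd B_r.
\]
By the It\^o isometry and Parseval, $\E[(L_t^N)^2]=\|(I-P_N)\ind_{[0,t]}\|_{L^2}^2\to 0$ as $N\to\infty$, and since $L^N$ is centred Gaussian, this $L^2$-convergence implies joint convergence in probability of $(L_{t_1}^N,\ldots,L_{t_k}^N)$ to $0$ for every finite collection of times.

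For tightness on $C([0,1],\R)$, the same representation together with contractivity of $I-P_N$ yields, uniformly in $N$,
\[
  \E\bigl[(L_t^N-L_s^N)^2\bigr] = \bigl\|(I-P_N)\ind_{(s,t]}\bigr\|_{L^2}^2 \leq \bigl\|\ind_{(s,t]}\bigr\|_{L^2}^2 = |t-s|.
\]
Centred Gaussianity upgrades this to $\E[(L_t^N-L_s^N)^4]\leq 3(t-s)^2$, so Kolmogorov--Chentsov delivers tightness. Combining with the finite-dimensional convergence via Prokhorov then forces every subsequential weak limit to be $\delta_0$, hence the whole sequence converges weakly on $C([0,1],\R)$ to $\delta_0$. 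Finally $L_0^N=0$ trivially and $L_1^N=0$ almost surely (using $Q_0\equiv 1$ together with $\int_0^1Q_n\dd r=0$ for $n\geq 1$), so the laws live on the closed subset $\Omega^{0,0}$ and the weak convergence restricts. I do not anticipate a genuine obstacle: once the loop is identified as the It\^o integral of the projection residual, everything reduces to textbook Hilbert-space and Gaussian moment estimates, and the only substantive external input is the classical completeness of the shifted Legendre system in $L^2([0,1])$.
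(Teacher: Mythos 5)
Your proof is correct, and it reaches the conclusion by a route that differs from the paper's in how it passes from covariance decay to convergence on path space. Both arguments hinge on the same input, the completeness of the shifted Legendre system via Parseval, and both effectively identify $\E[(L^N_t)^2]$ with the squared norm of the tail of the expansion of $\ind_{[0,t]}$. The paper, however, works entirely at the level of the covariance functions: it writes $C_N(s,t)$ as the tail of the Parseval series for $\min(s,t)$, upgrades the pointwise decay to uniform convergence of $C_N$ to zero on $[0,1]^2$ via a Cauchy--Schwarz bound and Dini's theorem (in the spirit of the proof of Mercer's theorem), and then invokes a cited criterion for weak convergence of Gaussian measures with uniformly vanishing covariances. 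You instead exploit the representation $L^N_t=\int_0^1(\ind_{[0,t]}-P_N\ind_{[0,t]})\dd B_r$ more directly: the contraction property of $I-P_N$ gives the $N$-uniform bound $\E[(L^N_t-L^N_s)^2]\leq|t-s|$, Gaussian hypercontractivity of moments plus Kolmogorov--Chentsov gives tightness, and finite-dimensional convergence to zero (from the It\^o isometry and completeness) identifies the limit; the final restriction to the closed subset $\Omega^{0,0}$ uses $L^N_0=0$ and $L^N_1=0$ a.s., which is correct since $\int_0^1 Q_n=0$ for $n\geq1$ and the $n=0$ term reproduces $B_1$. Your route is more self-contained (no Dini, no external Gaussian-convergence reference) and makes the tightness mechanism explicit, at the cost of invoking the Kolmogorov moment criterion and a Tietze-type restriction argument; the paper's route yields the slightly stronger statement that the covariances converge uniformly on the square, and keeps the whole argument at the level of covariance kernels, which is also the form in which the fluctuation analysis of the later sections is carried out.
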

A similar question can be posed for Brownian loops.
We conjecture that, for $d\geq 2$,
the Brownian loops of step $N$ converge weakly to the zero
process in $\R^d$ as $N\to\infty$, see~\cite[Conjecture~4.1.3]{PhDthesis}.

Going beyond the law of large numbers, we further study the
fluctuation processes of the iterated Kolmogorov loops of step $N$
obtained through scaling by $\sqrt{N}$ in the limit $N\to\infty$.
As seen in Section~\ref{kol_loops}, cf. Lemma~\ref{lem:covariance}, 
the covariance function $C_N$ of the iterated Kolmogorov loop
of step $N$ is given, for $s,t\in[0,1]$, by
\begin{equation*}
  C_N(s,t)=
  \min(s,t)-\sum_{n=0}^{N-1}(2n+1)\int_0^sQ_n(r)\dd r\int_0^tQ_n(r)\dd r\;.
\end{equation*}
Our central limit type theorem for the iterated Kolmogorov loops then
relies on the following limit theorem involving Legendre
polynomials. For convenience, it is expressed in terms of the Legendre
polynomials on $[-1,1]$.
\begin{thm}\label{thm:CLT4P}
  Let $P_n$ be the Legendre polynomial of degree $n$
  on $[-1,1]$. Fix $x,y\in[-1,1]$ and, for $N\in\N$, set
  \begin{equation}\label{defn:R}
    R_N(x,y)=N\left(\min(1+x,1+y)-\sum_{n=0}^{N-1}\frac{2n+1}{2}
      \int_{-1}^xP_n(z)\dd z
      \int_{-1}^yP_n(z)\dd z\right)\;.
  \end{equation}
  Then, we have
  \begin{equation*}
    \lim_{N\to\infty} R_N(x,y)=
    \begin{cases}
      \frac{1}{\pi}\sqrt{1-x^2} & \mbox{if } x=y\\
      0 & \mbox{if } x\not=y
    \end{cases}\;,
  \end{equation*}
  that is, $R_N\colon[-1,1]\times[-1,1]\to\R$ converges pointwise
  as $N\to\infty$ to the specified limit function.
\end{thm}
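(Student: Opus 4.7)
The plan is to recognise $R_N$ as $N$ times the tail of a Parseval expansion and then to control that tail via classical Laplace-type asymptotics for Legendre polynomials. Since $\ind_{[-1,x]}\in L^2([-1,1])$ and $\{P_n\}_{n\geq 0}$ is complete and orthogonal with $\|P_n\|_{L^2}^2=2/(2n+1)$, Parseval's identity gives
\begin{equation*}
  \min(1+x,1+y)
  =\sum_{n=0}^{\infty}\frac{2n+1}{2}\int_{-1}^{x}P_n(z)\dd z\int_{-1}^{y}P_n(z)\dd z\;,
\end{equation*}
so that $R_N(x,y)=N\sum_{n=N}^{\infty}\frac{2n+1}{2}\int_{-1}^{x}P_n\dd z\int_{-1}^{y}P_n\dd z$. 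The classical identity $\int_{-1}^{x}P_n(z)\dd z=(P_{n+1}(x)-P_{n-1}(x))/(2n+1)$ for $n\geq 1$ rewrites each summand as $(P_{n+1}(x)-P_{n-1}(x))(P_{n+1}(y)-P_{n-1}(y))/(2(2n+1))$, which is the form best suited to asymptotic analysis.

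Next I would substitute Laplace's asymptotic $P_n(\cos\theta)=\sqrt{2/(\pi n\sin\theta)}\cos((n+\tfrac12)\theta-\tfrac{\pi}{4})+O(n^{-3/2})$, valid uniformly on compact subsets of $(0,\pi)$. Writing $x=\cos\alpha$, $y=\cos\beta$ and applying $\cos A-\cos B=-2\sin\tfrac{A+B}{2}\sin\tfrac{A-B}{2}$ yields
\begin{equation*}
  P_{n+1}(x)-P_{n-1}(x)
  =-\sqrt{\frac{8\sin\alpha}{\pi n}}\sin\bigl((n+\tfrac{1}{2})\alpha-\tfrac{\pi}{4}\bigr)+O(n^{-3/2})\;,
\end{equation*}
and likewise at $y$. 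Consequently the $n$th tail summand equals $\frac{2\sqrt{\sin\alpha\sin\beta}}{\pi n^2}\sin\bigl((n+\tfrac12)\alpha-\tfrac{\pi}{4}\bigr)\sin\bigl((n+\tfrac12)\beta-\tfrac{\pi}{4}\bigr)+O(n^{-3})$. When $x=y$ the product of sines equals $\tfrac12-\tfrac12\sin((2n+1)\alpha)$; summing $\tfrac12/n^2$ from $n=N$ gives $1/(2N)+O(N^{-2})$, while Abel summation bounds the oscillating piece by $O(N^{-2})$, so $R_N(x,x)\to\sin\alpha/\pi=\sqrt{1-x^2}/\pi$. When $x\neq y$, a product-to-sum identity recasts the summand as a linear combination of $\cos((n+\tfrac12)(\alpha-\beta))/n^2$ and $\sin((n+\tfrac12)(\alpha+\beta))/n^2$; since $\alpha\pm\beta\not\equiv 0\pmod{2\pi}$ for $x,y\in(-1,1)$, Abel summation again yields an $O(N^{-2})$ tail and hence $R_N(x,y)=O(N^{-1})\to 0$. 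The endpoint configurations $x\in\{-1,1\}$ (and likewise $y$) are settled separately using $P_n(\pm 1)=(\pm 1)^n$: for $x=1$ every $\int_{-1}^{1}P_n\dd z$ with $n\geq 1$ vanishes and $\min(2,1+y)=1+y$, so $R_N(1,y)=0$ exactly, matching the claimed limit.

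The main obstacle is keeping the error control tight enough that multiplication by $N$ does not wipe out the answer: each cross term between the leading $O(n^{-1/2})$ part of Laplace's formula and its $O(n^{-3/2})$ remainder contributes $O(n^{-2})$ per summand, which summed from $N$ and multiplied by $N$ is acceptably $O(N^{-1})$, but this bookkeeping must be carried out explicitly, and the Abel summation constants for the oscillating sums must be tracked for fixed $\alpha,\beta$. A completely different and potentially cleaner route, which the abstract hints at, would be to expand each $\int_{-1}^{x}P_n(z)\dd z$ through the closed-form expression for Legendre polynomials and evaluate the resulting double sum via a combinatorial identity (a Catalan-triangle relation), bypassing asymptotics altogether; I would keep this algebraic alternative in reserve should the analytic estimates near $x,y=\pm 1$ become unwieldy.
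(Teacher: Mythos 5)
Your argument is correct in substance (the remaining work is the error bookkeeping you yourself flag, which goes through exactly as you describe), but it follows a genuinely different route from the paper. The common ground is the starting point: the Parseval identity turning $R_N$ into $N$ times the tail $\frac{N}{2}\sum_{n\geq N}(2n+1)I_n(x)I_n(y)$ with $(2n+1)I_n=P_{n+1}-P_{n-1}$, and the Darboux asymptotics. From there the paper splits the work: off the diagonal it proves a Christoffel--Darboux type formula for the $I_n$ (Proposition~\ref{propn:chris4I}), which structurally upgrades the naive $O(n^{-2})$ size of $(2n+1)I_n(x)I_n(y)$ to $O(n^{-3})$ terms $D_{n+1}(x,y)$; on the diagonal it never computes the limit directly, but shows $\frac{\db}{\db x}S_N(x)=-NP_{N-1}(x)P_N(x)$ is locally uniformly bounded, applies Arzel\`a--Ascoli, and identifies the limit through the moment analysis of Section~\ref{moments} (the Catalan-triangle computation), which is where the semicircle emerges. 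You instead expand $P_{n+1}-P_{n-1}$ via the Laplace/Darboux formula, use product-to-sum identities, and evaluate both the diagonal and off-diagonal tails directly by separating the non-oscillatory part $\tfrac12\sum n^{-2}$ from oscillatory sums controlled by Abel summation. What your route buys: it is self-contained, bypasses Section~\ref{moments} and Proposition~\ref{propn:chris4I} entirely, and gives an explicit $O(N^{-1})$ rate at each fixed interior point; the exact vanishing at $x\in\{-1,1\}$ handles the endpoints cleanly, just as in the paper. What the paper's route buys: the cancellation is packaged structurally (CD formula, exact derivative identity) rather than through asymptotic bookkeeping whose constants degenerate as $x\to\pm1$ or $y\to x$ (harmless for pointwise convergence, but the paper's diagonal argument yields locally uniform convergence on $(-1,1)$, and its CD formula also powers the decorrelation bound of Remark~\ref{rem:decorr}); moreover the moment computation exposes the Catalan-triangle combinatorics that the paper highlights. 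Your ``algebraic alternative in reserve'' is in fact close in spirit to what the paper does on the diagonal, except that the paper still needs the Darboux asymptotics for the equicontinuity step, so it does not avoid asymptotics altogether.
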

This result quantifies an integrated version of the completeness
and orthogonality property for the Legendre polynomials, which
in terms of the Dirac delta function is stated,
for $x,y\in[-1,1]$, as
\begin{equation*}
  \sum_{n=0}^\infty\frac{2n+1}{2} P_n(x)P_n(y)
  =\delta(x-y)\;.
\end{equation*}
The proof of Theorem~\ref{thm:CLT4P} is split into an on-diagonal and
an off-diagonal analysis. The pointwise convergence on the diagonal
follows from a convergence of moments,
cf. Proposition~\ref{propn:moments}, and a locally uniform
convergence implied by Lemma~\ref{lem:locunif_bound} and the
Arzel{\`a}--Ascoli theorem, whereas the pointwise convergence
away from the diagonal relies on a Christoffel--Darboux type formula
for the integrals of the Legendre polynomials,
cf. Proposition~\ref{propn:chris4I}. In both parts, we use asymptotic
estimates for Legendre polynomials and their integrals which are
implied by the Darboux formula for Jacobi polynomials. For
convenience, we include the Darboux formula as Theorem~\ref{thm:darboux}.

Equipped with Theorem~\ref{thm:CLT4P}, we deduce a central limit type
theorem for the iterated Kolmogorov loops. With
Proposition~\ref{propn:kol_loop_leg} in mind, we consider the
processes $(F_t^N)_{t\in[0,1]}$ defined by $F_t^N=\sqrt{N} L_t^N$.
\begin{thm}\label{thm:CLT}
  The fluctuation processes $(F_t^N)_{t\in[0,1]}$ converge in
  finite dimensional distributions as $N\to\infty$
  to the collection $(F_t)_{t\in[0,1]}$
  of independent zero-mean Gaussian random variables whose
  variances are given, for $t\in[0,1]$, by
  \begin{equation*}
    \E\left[F_t^2\right]=\frac{1}{\pi}\sqrt{t(1-t)}\;.
  \end{equation*}
\end{thm}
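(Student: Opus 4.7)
The plan is to reduce the statement to a covariance convergence, which by the Gaussianity of the approximating processes is the only information needed for convergence in finite-dimensional distributions. Since $(B_t)_{t\in[0,1]}$ is Gaussian and $(L_t^N)_{t\in[0,1]}$ is, by Proposition~\ref{propn:kol_loop_leg}, a linear functional of $B$, the process $(F_t^N)_{t\in[0,1]}=(\sqrt{N}L_t^N)_{t\in[0,1]}$ is a centred Gaussian process. For centred Gaussian vectors convergence in distribution is equivalent to convergence of covariance matrices, so it suffices to establish, for every $s,t\in[0,1]$,
\begin{equation*}
  \lim_{N\to\infty}N\,C_N(s,t)=\frac{1}{\pi}\sqrt{s(1-s)}\,\ind_{\{s=t\}}\;,
\end{equation*}
and then to check that the diagonal matrix with these entries is precisely the covariance structure of the claimed limit $(F_t)_{t\in[0,1]}$.

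First, I would use Lemma~\ref{lem:covariance} to write the covariance $\operatorname{Cov}(F_s^N,F_t^N)=N\,C_N(s,t)$ in terms of the shifted Legendre polynomials $Q_n$. Then I would convert to ordinary Legendre polynomials on $[-1,1]$ through the substitution $z=2r-1$, which gives $Q_n(r)=P_n(2r-1)$ and therefore
\begin{equation*}
  \int_0^sQ_n(r)\dd r=\frac{1}{2}\int_{-1}^{2s-1}P_n(z)\dd z\;,
\end{equation*}
together with $\min(s,t)=\tfrac{1}{2}\min(1+(2s-1),1+(2t-1))$. A short algebraic manipulation with the definition \eqref{defn:R} then yields the identity
\begin{equation*}
  N\,C_N(s,t)=\tfrac{1}{2}R_N(2s-1,2t-1)\;.
\end{equation*}

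At this point Theorem~\ref{thm:CLT4P} does all the heavy lifting. Applied with $x=2s-1$ and $y=2t-1$, it gives $\tfrac{1}{2}R_N(x,y)\to 0$ whenever $s\neq t$, and in the diagonal case $s=t$ it gives
\begin{equation*}
  \tfrac{1}{2}R_N(2s-1,2s-1)\to\tfrac{1}{2\pi}\sqrt{1-(2s-1)^2}=\tfrac{1}{\pi}\sqrt{s(1-s)}\;,
\end{equation*}
which is exactly $\E[F_t^2]$ specialised to $t=s$. This matches the covariance of the proposed limit: a family of independent centred Gaussians has zero off-diagonal covariances and the prescribed variances on the diagonal.

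Finally, to deduce convergence in finite-dimensional distributions, I would pick arbitrary $t_1,\dots,t_k\in[0,1]$ and assemble the covariance matrix of $(F_{t_1}^N,\dots,F_{t_k}^N)$. The pointwise limit just established shows that this matrix converges entrywise to the diagonal matrix with entries $\frac{1}{\pi}\sqrt{t_i(1-t_i)}$. Standard Gaussian theory (for example, continuity of the characteristic function in the covariance matrix) then implies the convergence in distribution of the $k$-dimensional marginals to the claimed limit. Since the only non-trivial input is Theorem~\ref{thm:CLT4P}, which has been isolated and proved separately, the central difficulty has in effect been discharged before entering the proof of Theorem~\ref{thm:CLT}; the main substantive step in this proof is the clean change-of-variables identity that links $NC_N$ with $R_N$.
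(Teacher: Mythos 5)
Your proposal is correct and follows essentially the same route as the paper's own proof: the identity $N\,C_N(s,t)=\tfrac{1}{2}R_N(2s-1,2t-1)$, the application of Theorem~\ref{thm:CLT4P} on and off the diagonal, and the conclusion via convergence of Gaussian characteristic functions (the paper invokes L\'evy's continuity theorem at this step). No gaps.
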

It is certainly interesting that the variances of the limit fluctuations
follow a scaled semicircle, and we remark that semicircles naturally
appear in other limit theorems such as the Wigner semicircle law
in random matrix theory,
cf.~\cite[Theorem~2.1.1]{guionnet}, or the central limit theorem in free
probability, see~\cite[Theorem~8.10]{speicher}.
Moreover, as pointed out in Remark~\ref{rem:decorr}, we can obtain a
non-trivial bound on the scale of the decorrelation observed for the
rescaled iterated Kolmogorov loops.

The reason for considering convergence in finite dimensional
distributions in Theorem~\ref{thm:CLT} is that
while the collection $(F_t)_{t\in[0,1]}$ of independent zero-mean
Gaussian random variables is well-defined,
see~\cite[Section~2.3]{bogachev}, it
neither has a realisation as a process in $C([0,1],\R)$,
cf.~\cite[Example~1.2.4]{kallianpur},
nor is it equivalent to a measurable process,
cf.~\cite[Example~1.2.5]{kallianpur}.
This is also why $(F_t)_{t\in[0,1]}$, which could be thought of as an
inhomogeneous white noise process with vanishing power spectral
density, is not treated as a useful mathematical model for white noise.

The paper is organised as follows. In Section~\ref{legendre}, we
recall properties of Legendre polynomials and their integrals, and
we introduce complex-valued polynomials which simplify the
presentation and some
of the arguments given in Section~\ref{moments}. That section is concerned
with studying the moments of $R_N$ on the diagonal in the 
limit $N\to\infty$. As part of the analysis,
which uses partial fraction decompositions,
we encounter a Catalan triangle,
see Remark~\ref{rem:catalan}. In Section~\ref{kol_loops}, we determine an
expression for the iterated Kolmogorov loop of step $N$ in terms of
the inverse of an $N\times N$ factorial Hankel matrix, and we prove
Proposition~\ref{propn:kol_loop_leg} as well as
Theorem~\ref{thm:LLN}.
In Section~\ref{fluctuations}, we give the proof of
Theorem~\ref{thm:CLT4P} which makes use of the Christoffel--Darboux
type formula for the integrals of the Legendre polynomials
stated in Proposition~\ref{propn:chris4I}, and we conclude with the
proof of Theorem~\ref{thm:CLT}.  Throughout, we use
the convention that $\N$ denotes the positive
integers, whereas $\N_0$ refers to the non-negative integers.

\proof[Acknowledgement]
I am grateful to Martin Huesmann and James Norris for helpful
discussions.
\section{Legendre polynomials and their integrals}
\label{legendre}
We discuss properties of Legendre polynomials that are needed for our
subsequent analysis and we extend the Legendre
polynomials to a family of complex-valued polynomials on $[-1,1]$.
Using this extension, we introduce a second family of complex-valued
polynomials, which is linked to the integrals of Legendre polynomials.

Let $\{P_n\colon n\in\N_0\}$ be the family of the Legendre polynomials on
the interval $[-1,1]$. Following the physical motivation presented in
Arfken and Weber~\cite[Section~12.1]{arfken} of considering the
electrostatic potential of a point charge, the Legendre polynomials
can be defined by means of a generating
function through
\begin{equation*}
  \sum_{n=0}^\infty P_n(x)z^n =\frac{1}{\sqrt{1-2xz+z^2}}
  \quad\mbox{for }z\in(-1,1)\;.
\end{equation*}
As derived in~\cite[Section~12.2]{arfken}, the generating
function can be used to establish the Bonnet recursion formula
\begin{equation}\label{bonnet}
  (n+1)P_{n+1}(x)=(2n+1)xP_n(x)-nP_{n-1}(x)
  \quad\mbox{for }n\in\N \mbox{ and }x\in[-1,1]
\end{equation}
as well as the relation
\begin{equation}\label{LegInt}
  (2n+1)\int_{-1}^xP_n(z)\dd z=P_{n+1}(x)-P_{n-1}(x)
  \quad\mbox{for }n\in\N \mbox{ and }x\in[-1,1]\;.
\end{equation}
It is further shown in~\cite[Section~12.2]{arfken} that we have the
parity property 
\begin{equation}\label{eq:parity}
  P_n(-x)=(-1)^nP_n(x)
  \quad\mbox{for }n\in\N_0 \mbox{ and }x\in[-1,1]\;,
\end{equation}
and that, for all $n\in\N_0$, the Legendre polynomial $P_n$ satisfies
the Legendre differential equation
\begin{equation}\label{LegODE}
  \frac{\db}{\db x}\left(
    \left(1-x^2\right)\frac{d}{\db x}\right) P_n(x)
  +n(n+1)P_n(x)=0\;.
\end{equation}
The latter could also be used to define the Legendre polynomials by
letting $P_n$ be the polynomial solution of the Legendre
differential equation~(\ref{LegODE}).
As detailed in Lebedev~\cite[Section~4.5]{lebedev},
the orthogonality of the Legendre polynomials
\begin{equation*}
  \int_{-1}^1P_n(x)P_m(x)\dd x=0
  \quad\mbox{for }n,m\in\N_0\mbox{ with }n\not=m
\end{equation*}
follows from (\ref{LegODE}) and is applied together with the Bonnet
recursion formula~(\ref{bonnet}) to prove that
\begin{equation}\label{orth4P}
  \int_{-1}^1\left(P_n(x)\right)^2\dd x=\frac{2}{2n+1}
  \quad\mbox{for }n\in\N_0\;.
\end{equation}
Alternatively, Legendre polynomials could be defined as the sequence
of polynomials orthogonal with respect to the weighting function $1$
over $[-1,1]$ subject to requiring $P_n(1)=1$
for all $n\in\N_0$, see Andrews, Askey and Roy~\cite[Remark~5.3.1]{roy}.
The Legendre polynomials then arise by applying the Gram--Schmidt
orthogonalisation process to the monomials
$\{x^n\colon n\in\N_0\}$ on $[-1,1]$
with respect to the usual $L^2$ inner product and by
imposing the normalisation $P_n(1)=1$ for all $n\in\N_0$.
With this approach the completeness of the Legendre polynomials
follows immediately.
Another option is to rewrite the Legendre
differential equation~(\ref{LegODE})
as an eigenvalue problem and to
appeal to Sturm--Liouville theory, cf.~\cite[Chapter~10]{arfken}.

In our expressions for the iterated Kolmogorov loops, we actually need the
family $\{Q_n\colon n\in\N_0\}$ of the shifted Legendre polynomials on the
interval $[0,1]$, which are given by
\begin{equation*}
  Q_n(t)=P_n(2t-1)\quad\mbox{for }t\in[0,1]\;.
\end{equation*}
These polynomials inherit their properties from the
Legendre polynomials on $[-1,1]$.
In particular, the
shifted Legendre polynomials form a complete orthogonal system with
\begin{equation}\label{orthconst4Q}
  \int_0^1\left(Q_n(t)\right)^2\dd t =\frac{1}{2n+1}
  \quad\mbox{for }n\in\N_0\;,
\end{equation}
and they satisfy the parity relation
\begin{equation}\label{parityQ}
  Q_n(1-t)=(-1)^nQ_n(t)
  \quad\mbox{for }n\in\N_0 \mbox{ and }t\in[0,1]\;.
\end{equation}

\subsection{Complex-valued Legendre polynomials}
\label{complexext}
We introduce a family $\{P_n\colon n\in\Z\}$ indexed by
the integers $\Z$
of complex-valued polynomials on $[-1,1]$ which extends the 
family $\{P_n\colon n\in\N_0\}$ of Legendre polynomials on $[-1,1]$.
When generalising the Legendre polynomials and dealing with associated
Legendre polynomials, it is common to define the associated Legendre
polynomial of zeroth order and negative degree $-n-1$ to equal $P_n$
for $n\in\N_0$. The reason for this is that the Legendre differential
equation~(\ref{LegODE}) is invariant under a change from $n$ to
$-n-1$. However, we instead choose to set
\begin{equation}\label{defn:complexL}
  P_{-n-1}(x)=\im P_n(x) \quad\mbox{for }n\in\N_0
  \mbox{ and }x\in[-1,1]\;.
\end{equation}
Our motivation for this choice is that,
according to~(\ref{orth4P}), it gives rise to
\begin{equation*}
  \int_{-1}^1\left(P_{-n-1}(x)\right)^2\dd x
  =-\int_{-1}^1\left(P_{n}(x)\right)^2\dd x
  =-\frac{2}{2n+1}=\frac{2}{2(-n-1)+1}
  \quad\mbox{for }n\in\N_0\;,
\end{equation*}
and therefore, we have
\begin{equation}\label{scale4Z}
  \int_{-1}^1\left(P_n(x)\right)^2\dd x =\frac{2}{2n+1}
  \quad\mbox{for all }n\in\Z\;.
\end{equation}
Moreover, the Bonnet recursion formula extends consistently across
the original boundary at $n=0$ to all $n\in\Z$.
\begin{lemma}\label{lem:bonnet}
  For all $n\in\Z$ and all $x\in[-1,1]$, we have
  \begin{equation*}
    (n+1)P_{n+1}(x)=(2n+1)xP_n(x)-nP_{n-1}(x)\;.
  \end{equation*}
\end{lemma}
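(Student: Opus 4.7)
The plan is to reduce the identity for every integer $n$ to the original Bonnet formula~(\ref{bonnet}), which is available for $n\in\N$, together with a direct computation at the two boundary indices $n=0$ and $n=-1$.

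First I would dispose of the range $n\geq 1$: here the statement is literally~(\ref{bonnet}), so nothing is to be done. For $n=0$, substituting $P_0\equiv 1$ and $P_1(x)=x$ reduces the claim to $P_1(x)=xP_0(x)$, which is immediate (the term involving $P_{-1}$ drops out because of the factor $-n=0$). For $n=-1$, the left-hand side vanishes since $n+1=0$, and on the right the identity $-xP_{-1}(x)+P_{-2}(x)=\im(-x+x)=0$ follows at once from the definitions $P_{-1}=\im P_0=\im$ and $P_{-2}=\im P_1=\im x$.

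The main case is $n\leq -2$. Here I would substitute $n=-m-1$ with $m\in\N$ and translate each index using~(\ref{defn:complexL}), noting that $P_{n+1}=P_{-m}=\im P_{m-1}$, $P_n=P_{-m-1}=\im P_m$, and $P_{n-1}=P_{-m-2}=\im P_{m+1}$, while the scalar coefficients become $n+1=-m$, $2n+1=-(2m+1)$, and $-n=m+1$. After these substitutions the claimed recursion reads
\begin{equation*}
  -m\,\im\,P_{m-1}(x)=-(2m+1)x\,\im\,P_m(x)+(m+1)\,\im\,P_{m+1}(x)\;,
\end{equation*}
and cancelling the common factor $\im$ and rearranging turns this into the original Bonnet recursion~(\ref{bonnet}) applied at the positive integer $m$, which is therefore valid.

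There is no real obstacle to the argument; the only point demanding care is the book-keeping at $n=-1$, since both sides of the claimed identity then vanish and one must verify the cancellation on the right-hand side rather than dividing through by a non-zero factor. Putting the four cases together yields the Bonnet recursion for every $n\in\Z$.
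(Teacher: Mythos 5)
Your proof is correct and takes essentially the same route as the paper: reduce negative indices to the classical Bonnet recursion~(\ref{bonnet}) via the identification $P_{-m-1}=\im P_m$, and verify the boundary cases directly. The only cosmetic difference is that you treat $n=-1$ as a separate explicit check, whereas the paper folds it into the general negative case (where the vanishing factor $n+1=0$ makes this harmless) and only verifies $n=0$ separately.
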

\begin{proof}
  For $n\in\N$, this is the usual Bonnet recursion
  formula~(\ref{bonnet}).
  If $n\in\Z\setminus\N_0$ then, due to~(\ref{defn:complexL}),
  we have
  \begin{equation*}
    P_n(x)=\im P_{-n-1}(x) \quad\mbox{for }x\in[-1,1]\;,
  \end{equation*}
  and we use~(\ref{bonnet}) in the form
  \begin{equation*}
    -nP_{-n}(x)=(-2n-1)xP_{-n-1}(x)-(-n-1)P_{-n-2}(x)
  \end{equation*}
  to deduce that, for all $x\in[-1,1]$,
  \begin{align*}
    (n+1)P_{n+1}(x)=(n+1)\im P_{-n-2}(x)
    &=(2n+1)x\im P_{-n-1}(x)-n\im P_{-n}(x)\\
    &=(2n+1)xP_n(x)-n P_{n-1}(x)\;,
  \end{align*}
  as required. For $n=0$, we explicitly  see that $P_1(x)=x$ coincides
  with $xP_0(x)=x$.
\end{proof}
This extension of the Legendre polynomials turns out to be convenient
for our analysis. In the next section, we use these polynomials to
introduce a family of complex-valued polynomials related to the
integrals of the Legendre polynomials.

\subsection{Integrals of Legendre polynomials}
\label{general_int}
Let $\{I_n\colon n\in\Z\}$ be the family index by $\Z$ defined by
\begin{equation}\label{defn:I}
  (2n+1)I_n(x)=P_{n+1}(x)-P_{n-1}(x)
  \quad\mbox{for }n\in\Z\mbox{ and }x\in[-1,1]\;.
\end{equation}
The property~(\ref{LegInt}) implies that
\begin{equation}\label{Iisint}
  I_n(x)=\int_{-1}^xP_n(z)\dd z
  \quad\mbox{for all }n\in\N \mbox{ and }x\in[-1,1]\;.
\end{equation}
However, we notice that this relation does not hold for $n=0$
because
\begin{equation*}
  I_0(x)=P_1(x)-P_{-1}(x)=P_1(x)-\im P_0(x)=x-\im\;,
\end{equation*}
whereas $\int_{-1}^xP_0(z)\dd z=1+x$. This discrepancy is exploited to
present a short proof of Lemma~\ref{lem:idat0}.

The parity property~(\ref{eq:parity}) yields
\begin{equation}\label{parityI}
  I_n(-x)=(-1)^{n+1}I_n(x)
  \quad\mbox{for }n\in\N \mbox{ and }x\in[-1,1]\;,
\end{equation}
and in particular,
\begin{equation}\label{Iboundary}
  I_n(1)=I_n(-1)=0
  \quad\mbox{for all }n\in\N\;.
\end{equation}
We further obtain the symmetry relation stated below as well as a
recursion formula.
\begin{lemma}\label{Isym}
  For all $n\in\N$ and all $x\in[-1,1]$, we have
  \begin{equation*}
    I_{-n-1}(x)=\im I_n(x)\;.
  \end{equation*}
\end{lemma}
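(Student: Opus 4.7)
The plan is to prove this by a direct unwinding of the definitions, with no analytic content beyond index bookkeeping. The key point is that the defining relation (\ref{defn:I}) for $I_n$ makes sense for all $n\in\Z$, and the extension (\ref{defn:complexL}) of the Legendre polynomials to negative indices is designed precisely so that symmetry identities of this type hold.

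First I would apply the defining relation (\ref{defn:I}) with $n$ replaced by $-n-1$, obtaining
\begin{equation*}
(-2n-1)I_{-n-1}(x)=P_{-n}(x)-P_{-n-2}(x).
\end{equation*}
Since $n\in\N$, both $n-1$ and $n+1$ lie in $\N_0$, so the extension (\ref{defn:complexL}) gives
\begin{equation*}
P_{-n}(x)=P_{-(n-1)-1}(x)=\im P_{n-1}(x),\qquad P_{-n-2}(x)=P_{-(n+1)-1}(x)=\im P_{n+1}(x).
\end{equation*}
Substituting and factoring out $-\im$ yields
\begin{equation*}
(-2n-1)I_{-n-1}(x)=-\im\bigl(P_{n+1}(x)-P_{n-1}(x)\bigr)=-\im(2n+1)I_n(x),
\end{equation*}
where the last equality uses (\ref{defn:I}) for $n$ itself. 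Dividing by $-(2n+1)$ gives $I_{-n-1}(x)=\im I_n(x)$, as claimed.

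There is no real obstacle here; the only thing to be careful about is the range of $n$ for which (\ref{defn:complexL}) applies. The hypothesis $n\in\N$ ensures $n-1\geq 0$ so that both occurrences of $P$ with negative index fall within the scope of the extension, which is why the statement is restricted to $n\in\N$ rather than $n\in\N_0$.
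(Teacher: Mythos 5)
Your proof is correct and is essentially identical to the paper's argument: both apply the defining relation~(\ref{defn:I}) at index $-n-1$, convert $P_{-n}$ and $P_{-n-2}$ via the extension~(\ref{defn:complexL}) (using $n-1\in\N_0$, which is where $n\in\N$ is needed), and then recognise $P_{n+1}-P_{n-1}$ as $(2n+1)I_n$. Nothing is missing.
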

\begin{proof}
  If $n\in\N$ then $n-1\in\N_0$ and therefore,
  by the definition~(\ref{defn:I}) and
  by~(\ref{defn:complexL}), we see that
  \begin{align*}
    -(2n+1)I_{-n-1}(x)
    =\left(2(-n-1)+1\right)I_{-n-1}(x)
    &=P_{-n}(x)-P_{-n-2}(x)\\
    &=\im P_{n-1}(x)-\im P_{n+1}(x)
    =-(2n+1)\im I_n(x)\;,
  \end{align*}
  which implies the desired result.
\end{proof}
In Lemma~\ref{Isym},
it is important to restrict our attention to $n\in\N$
since for $n=0$, we have
\begin{equation*}
  I_{-1}(x)=\im x-1
  \quad\mbox{and}\quad
  \im I_0(x)=\im x+1\;.
\end{equation*}
\begin{lemma}\label{lem:rec4I}
  For all $n\in\Z$ and all $x\in[-1,1]$, we have the recursion formula
  \begin{equation*}
    (n+2)I_{n+1}(x)=(2n+1)xI_n(x)-(n-1)I_{n-1}(x)\;.
  \end{equation*}
\end{lemma}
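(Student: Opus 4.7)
The plan is to reduce the asserted recursion for $I_n$ to an identity among Legendre polynomials, which can then be settled using the extended Bonnet recursion from Lemma~\ref{lem:bonnet}. Since the definition (\ref{defn:I}) reads $(2m+1)I_m(x)=P_{m+1}(x)-P_{m-1}(x)$, substituting for each of $I_{n+1}$, $I_n$, and $I_{n-1}$ turns the claim into
\begin{equation*}
  \frac{n+2}{2n+3}\bigl(P_{n+2}(x)-P_n(x)\bigr)
  -x\bigl(P_{n+1}(x)-P_{n-1}(x)\bigr)
  +\frac{n-1}{2n-1}\bigl(P_n(x)-P_{n-2}(x)\bigr)=0\;.
\end{equation*}
All the denominators $2n+3$, $2n+1$, $2n-1$ are nonzero for $n\in\Z$, so this algebraic manipulation is legitimate throughout.

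Next I would eliminate the terms $xP_{n+1}$ and $xP_{n-1}$ by invoking Lemma~\ref{lem:bonnet} twice, once at index $n+1$ to get $(n+2)P_{n+2}(x)=(2n+3)xP_{n+1}(x)-(n+1)P_n(x)$, and once at index $n-1$ to get $nP_n(x)=(2n-1)xP_{n-1}(x)-(n-1)P_{n-2}(x)$. Rearranging these to express $xP_{n+1}$ and $xP_{n-1}$ and substituting, the coefficients of $P_{n+2}$ and $P_{n-2}$ cancel verbatim, while the coefficient of $P_n$ collapses to
\begin{equation*}
  -\frac{n+2}{2n+3}-\frac{n+1}{2n+3}+\frac{n}{2n-1}+\frac{n-1}{2n-1}
  =-\frac{2n+3}{2n+3}+\frac{2n-1}{2n-1}=0\;.
\end{equation*}
This establishes the recursion for every $n\in\Z$ in one unified computation, which is the principal benefit of having extended Bonnet's formula uniformly across $\Z$ in Lemma~\ref{lem:bonnet}.

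There is essentially no serious obstacle: the only thing to watch is that the purely formal definition (\ref{defn:I}) of $I_n$ for negative $n$ involves the complex-valued polynomials $P_{-m-1}=\im P_m$, so the intermediate expressions are complex, but the same scalar identity holds coefficient-by-coefficient. In particular, no case distinction at $n=0$ or between positive and non-positive $n$ is needed; the proof is a single linear computation whose only input is the extended Bonnet recursion of Lemma~\ref{lem:bonnet}.
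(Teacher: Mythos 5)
Your proof is correct and follows essentially the same route as the paper: both reduce the claim, via the definition~(\ref{defn:I}), to the extended Bonnet recursion of Lemma~\ref{lem:bonnet} applied at indices $n+1$ and $n-1$, the only difference being that the paper packages the computation into the two intermediate identities $(n+2)I_{n+1}(x)=xP_{n+1}(x)-P_n(x)$ and $(n-1)I_{n-1}(x)=P_n(x)-xP_{n-1}(x)$ and adds them, while you substitute all three definitions at once and cancel coefficients. Your observation that the odd denominators $2n\pm1$, $2n+3$ never vanish for $n\in\Z$ is the right point to flag and makes the argument valid uniformly over $\Z$, just as in the paper.
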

\begin{proof}
  This is a consequence of~(\ref{defn:I}) and the extended Bonnet
  recursion formula, cf. Lemma~\ref{lem:bonnet}. From
  \begin{equation*}
    (n+2)P_{n+2}(x)=(2n+3)xP_{n+1}(x)-(n+1)P_n(x)\;,
  \end{equation*}
  we deduce
  \begin{equation}\label{recstep1}
    (n+2)I_{n+1}(x)
    =\frac{(n+2)\left(P_{n+2}(x)-P_n(x)\right)}{2n+3}
    =xP_{n+1}(x)-P_n(x)\;,
  \end{equation}
  and similarly,
  \begin{equation*}
    n P_n(x)=(2n-1)xP_{n-1}(x)-(n-1)P_{n-2}(x)
  \end{equation*}
  implies that
  \begin{equation}\label{recstep2}
    (n-1)I_{n-1}(x)
    =\frac{(n-1)\left(P_n(x)-P_{n-2}(x)\right)}{2n-1}
    =P_n(x)-xP_{n-1}(x)\;.
  \end{equation}
  Adding equation~(\ref{recstep1}) to
  equation~(\ref{recstep2}) yields
  \begin{equation*}
    (n+2)I_{n+1}(x)+(n-1)I_{n-1}(x)=    
    x\left(P_{n+1}(x)-P_{n-1}(x)\right)\;,
  \end{equation*}
  and therefore, by~(\ref{defn:I}),
  \begin{equation*}
    (n+2)I_{n+1}(x)+(n-1)I_{n-1}(x)=(2n+1)xI_n(x)\;,
  \end{equation*}
  as claimed.
\end{proof}
Throughout the moment analysis presented in Section~\ref{moments}, it
is crucial, e.g. see Lemma~\ref{lem:mom_rec}, that the above
recursion formula holds for
all $n\in\Z$ and that the original boundary case at $n=0$ does not
need a special treatment.
For the latter,
the discrepancy between $I_0$ and the integral of
$P_0$ is also essential.

\subsection{Asymptotic behaviour}
We characterise the asymptotics in the limit $n\to\infty$
for Legendre polynomials and their
integrals by relating these polynomials to
Jacobi polynomials on $[-1,1]$ and then quoting the Darboux formula
for Jacobi polynomials.

Following Szeg\H{o}~\cite[Section~4.22]{gabor} and using the rising
Pochhammer
symbol, we define the Jacobi polynomial $P_n^{(\alpha,\beta)}$
of degree $n\in\N_0$ on $[-1,1]$ for $\alpha,\beta\in\R$ by
\begin{equation}\label{defn:jacobi}
  P_n^{(\alpha,\beta)}(x)
  =\frac{1}{n!}\sum_{k=0}^n\binom{n}{k}
  \left(n+\alpha+\beta+1\right)_k\left(\alpha+k+1\right)_{n-k}
  \left(\frac{x-1}{2}\right)^k
  \quad\mbox{for }x\in[-1,1]\;.
\end{equation}
If $\alpha,\beta>-1$, this agrees with the usual definition,
cf.~\cite[Definition~2.5.1]{roy},
\begin{equation*}
  P_n^{(\alpha,\beta)}(x)=\binom{n+\alpha}{n}
  {}_2F_1\left(-n,n+\alpha+\beta+1;\alpha+1;\frac{1-x}{2}\right)
  \quad\mbox{for }x\in[-1,1]\;,
\end{equation*}
where ${}_2F_1$ is the Gaussian hypergeometric function
represented by the power series
\begin{equation*}
  {}_2F_1(a,b;c;z)=\sum_{k=0}^\infty\frac{(a)_k(b)_k}{(c)_k}\frac{z^k}{k!}
  \quad\mbox{for }z\in(-1,1)\;.
\end{equation*}
For $\alpha,\beta>-1$ fixed, the polynomials $P_n^{(\alpha,\beta)}$
are orthogonal on
$[-1,1]$ with respect to the weighting function
$(1-x)^\alpha(1+x)^\beta$.
As discussed in~\cite[Section~4.21]{gabor}, the
expression~(\ref{defn:jacobi}) implies that
\begin{equation}\label{derjacobi}
  \frac{\db}{\db x}P_n^{(\alpha,\beta)}(x)
  =\frac{1}{2}(n+\alpha+\beta+1)P_{n-1}^{(\alpha+1,\beta+1)}(x)
  \quad\mbox{for }n\in\N \mbox{ and }x\in[-1,1]\;.
\end{equation}
As remarked in~\cite[Section~4.1]{gabor}, we further have
\begin{equation}\label{L2jacobi}
  P_n^{(0,0)}(x)=P_n(x)
  \quad\mbox{for }n\in\N_0 \mbox{ and }x\in[-1,1]\;,
\end{equation}
and from~(\ref{Iisint}) as well as (\ref{derjacobi}), it follows that
\begin{equation}\label{I2jacobi}
  P_{n+1}^{(-1,-1)}(x)=\frac{1}{2}n I_n(x)
  \quad\mbox{for }n\in\N \mbox{ and }x\in[-1,1]\;.
\end{equation}
An alternative derivation of~(\ref{I2jacobi}) which uses the
second order differential equations satisfied by
Jacobi polynomials is
given by Belinsky~\cite{belinsky}.
Moreover, according to~\cite[Theorem~3]{belinsky}, the polynomials
$\{I_n\colon n\in\N\}$ are
orthogonal on $[-1,1]$ with respect to the weighting function
$(1-x^2)^{-1}$. However, as this weighting function is not continuous
on $[-1,1]$ these polynomials do not belong to the class of classical
orthogonal polynomials.

To gain control over the Legendre polynomials and their integrals in the limit
$n\to\infty$, we
exploit an asymptotic property of Jacobi polynomials,
cf.~\cite[Theorem~8.21.8]{gabor}, which is due to Darboux~\cite{darboux}.
\begin{thm}[Darboux formula]\label{thm:darboux}
  Let $\alpha,\beta\in\R$ be arbitrary.
  For $\theta\in(0,\pi)$, set
  \begin{equation*}
    k(\theta)=\pi^{-\frac{1}{2}}
    \left(\sin\frac{\theta}{2}\right)^{-\alpha-\frac{1}{2}}
    \left(\cos\frac{\theta}{2}\right)^{-\beta-\frac{1}{2}}\;.
  \end{equation*}
  Then, as $n\to\infty$, we have
  \begin{equation*}
    P_n^{(\alpha,\beta)}\left(\cos\theta\right)=
    n^{-\frac{1}{2}}k(\theta)
    \cos\left(\left(n+\frac{\alpha+\beta+1}{2}\right)\theta
      -\left(\alpha+\frac{1}{2}\right)\frac{\pi}{2}\right)
    +O\left(n^{-\frac{3}{2}}\right)\;,
  \end{equation*}
  where the bound on the error term holds uniformly
  in $\theta\in[\eps,\pi-\eps]$ for $\eps>0$.
\end{thm}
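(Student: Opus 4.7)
The plan is to prove the Darboux formula by the classical generating function method, that is, by studying the asymptotics of the Taylor coefficients of the Jacobi generating function via Darboux's theorem for Taylor coefficients of functions with algebraic singularities on the circle of convergence. The starting point is the explicit generating function
\begin{equation*}
  \sum_{n=0}^\infty P_n^{(\alpha,\beta)}(x)z^n
  =\frac{2^{\alpha+\beta}}{R(z)\bigl(1-z+R(z)\bigr)^{\alpha}\bigl(1+z+R(z)\bigr)^{\beta}}\;,
\end{equation*}
where $R(z)=\sqrt{1-2xz+z^2}$, valid for sufficiently small $|z|$ with the branch fixed so that $R(0)=1$. Setting $x=\cos\theta$ with $\theta\in(0,\pi)$, one has $R(z)^2=(z-\e^{\im\theta})(z-\e^{-\im\theta})$, so the only singularities on the unit circle are the simple algebraic branch points $z_\pm=\e^{\pm\im\theta}$.

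Near each $z_\pm$, the generating function develops a singularity proportional to $(1-z/z_\pm)^{-1/2}$. Subtracting these singular pieces globally yields a decomposition
\begin{equation*}
  \sum_{n=0}^\infty P_n^{(\alpha,\beta)}(\cos\theta)z^n
  =A_+(\theta)\bigl(1-z/z_+\bigr)^{-1/2}+A_-(\theta)\bigl(1-z/z_-\bigr)^{-1/2}+H(z;\theta)\;,
\end{equation*}
where $H(\,\cdot\,;\theta)$ extends to a H\"older continuous function on the closed unit disk and the constants $A_\pm(\theta)$ are obtained by evaluating the nonsingular factors at $z=z_\pm$. Using the identities $1-\e^{\pm\im\theta}=\mp 2\im\sin(\theta/2)\e^{\pm\im\theta/2}$ and $1+\e^{\pm\im\theta}=2\cos(\theta/2)\e^{\pm\im\theta/2}$ together with the local expansion $R(z)\sim\sqrt{\pm 2\im\sin\theta}\sqrt{z-z_\pm}$, a careful branch-tracking computation yields
\begin{equation*}
  A_\pm(\theta)=\tfrac{1}{2}\sqrt{\pi}\,k(\theta)\,
  \e^{\mp\im\bigl((\alpha+\beta+1)\theta/2-(\alpha+1/2)\pi/2\bigr)}\;,
\end{equation*}
so that $A_-(\theta)$ is the complex conjugate of $A_+(\theta)$.

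Next, I would invoke Darboux's theorem on Taylor coefficient asymptotics: the $n$-th coefficient of $A_\pm(\theta)(1-z/z_\pm)^{-1/2}$ equals $A_\pm(\theta)z_\pm^{-n}\binom{n-1/2}{n}$, and Stirling's formula gives $\binom{n-1/2}{n}=n^{-1/2}\pi^{-1/2}+O(n^{-3/2})$. Summing the two conjugate contributions collapses the complex exponentials into a single real cosine,
\begin{equation*}
  A_+(\theta)\e^{-\im n\theta}+A_-(\theta)\e^{\im n\theta}
  =\sqrt{\pi}\,k(\theta)\cos\!\left(\!\left(n+\tfrac{\alpha+\beta+1}{2}\right)\theta
  -\left(\alpha+\tfrac{1}{2}\right)\tfrac{\pi}{2}\right)\;,
\end{equation*}
and the $\sqrt{\pi}$ cancels the $\pi^{-1/2}$ coming from the binomial asymptotics to produce the claimed leading term. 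The H\"older continuous remainder $H(\,\cdot\,;\theta)$ contributes Taylor coefficients of size $O(n^{-3/2})$ by standard smoothness-coefficient estimates (integration by parts in the Cauchy integral, or the Riemann--Lebesgue refinement of Darboux's lemma).

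The main obstacle is the phase bookkeeping in the second step: carefully tracking the branches of $(1-z+R)^{-\alpha}$, $(1+z+R)^{-\beta}$ and $R^{-1}$ near $z_\pm$ so that the arguments of $A_\pm(\theta)$ combine into the clean expression $\mp((\alpha+\beta+1)\theta/2-(\alpha+1/2)\pi/2)$. Uniformity of the error term for $\theta\in[\eps,\pi-\eps]$ is then routine, since $A_\pm(\theta)$ depends continuously on $\theta$, the singularities $\e^{\pm\im\theta}$ remain at distance at least $2\sin\eps$ apart, and the H\"older constants of $H(\,\cdot\,;\theta)$ can be controlled uniformly on this compact interval.
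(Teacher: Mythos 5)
This statement is not proved in the paper at all: it is quoted verbatim from Szeg\H{o} \cite[Theorem~8.21.8]{gabor} and attributed to Darboux, so there is no in-paper argument to compare against. Your sketch is essentially the classical proof (the method of Darboux applied to the Jacobi generating function, which is how Szeg\H{o} establishes the result), and your constants are correct: evaluating the regular factors at $z_\pm=\e^{\pm\im\theta}$ using $1-\e^{\im\theta}=2\sin(\theta/2)\e^{\im(\theta-\pi)/2}$, $1+\e^{\im\theta}=2\cos(\theta/2)\e^{\im\theta/2}$ and $(1-\e^{2\im\theta})^{1/2}=\sqrt{2\sin\theta}\,\e^{\im(\theta-\pi/2)/2}$ does give $A_\pm(\theta)=\tfrac{1}{2}\sqrt{\pi}\,k(\theta)\e^{\mp\im((\alpha+\beta+1)\theta/2-(\alpha+1/2)\pi/2)}$, and the two conjugate contributions recombine into the stated cosine with the $\binom{n-1/2}{n}\sim(\pi n)^{-1/2}$ factor. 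One should also note that the generating function identity is needed for arbitrary real $\alpha,\beta$ (the paper uses $\alpha=\beta=-1$), which follows from the case $\alpha,\beta>-1$ by analytic continuation in the parameters since the coefficients are polynomials in $(\alpha,\beta)$; and that $1\pm z+R$ has no zeros in the closed disk, so $z_\pm$ are indeed the only singularities.

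The one genuine gap is in the error estimate. After subtracting only the leading terms $A_\pm(\theta)(1-z/z_\pm)^{-1/2}$, the remainder $H$ has singular behaviour of type $(1-z/z_\pm)^{1/2}$, i.e.\ it is merely H\"older continuous of exponent $\tfrac12$ on the closed disk, and a H\"older-$\tfrac12$ function only has Taylor (Fourier) coefficients $O(n^{-1/2})$ in general; your claim that $H$ automatically contributes $O(n^{-3/2})$ is false as stated and would only yield an error $o(1)\cdot n^{-1/2}$ type bound, which is weaker than the theorem. The standard repair, which is what Szeg\H{o}'s proof does, is to take the comparison function one order further: include the $(1-z/z_\pm)^{1/2}$ terms of the local expansions (their coefficients are themselves $O(n^{-3/2})$), so that the new remainder has a derivative that is H\"older-$\tfrac12$ on $|z|=1$; then one integration by parts in the Cauchy integral combined with the H\"older modulus gives coefficients $O(n^{-1-1/2})=O(n^{-3/2})$. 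With that refinement, the uniformity claim for $\theta\in[\eps,\pi-\eps]$ is indeed routine, since the local expansion coefficients and H\"older constants can be chosen uniformly on that compact range; you should state this explicitly because the theorem as used in the paper (for the estimates \eqref{legasymp} and \eqref{intasymp}) relies on the uniformity.
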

By the Darboux formula, we particularly have, as $n\to\infty$,
\begin{align}
  P_n^{(0,0)}\left(\cos\theta\right)
  &=\sqrt{\frac{2}{n\pi\sin\theta}}\label{legasymp}
    \cos\left(\left(n+\frac{1}{2}\right)\theta
    -\frac{\pi}{4}\right)+O\left(n^{-\frac{3}{2}}\right)\;,
  \quad\mbox{and}\\
  P_n^{(-1,-1)}\left(\cos\theta\right)\label{intasymp}
  &=\sqrt{\frac{\sin\theta}{2n\pi}}
    \cos\left(\left(n-\frac{1}{2}\right)\theta
      +\frac{\pi}{4}\right)+O\left(n^{-\frac{3}{2}}\right)\;,
\end{align}
uniformly in $\theta\in[\eps,\pi-\eps]$ for $\eps>0$.
These asymptotics are used for estimates in
Section~\ref{fluctuations}.
\section{Iterated Kolmogorov loops}
\label{kol_loops}
We find two alternative representations for the iterated Kolmogorov
loop of step $N$ and we use the second representation,
cf.~Proposition~\ref{propn:kol_loop_leg}, to prove
Theorem~\ref{thm:LLN}. Whereas the first representation
is obtained by applying the most evident approach of
considering the first component of an
expression for the iterated
Kolmogorov diffusion $(\B_t^N)_{t\in[0,1]}$ of step $N$
conditioned on $\B_1^N=0$,
the second representation in terms of shifted Legendre polynomials
is much more useful for our analysis. This is due to
the orthogonality of the Legendre polynomials. Moreover, we see that
the first representation requires the inversion of a particular
$N\times N$ factorial Hankel matrix.

Throughout, for $l\in\{1,\dots,N\}$ and $t\in[0,1]$,
we write $\B_t^{N,l}$ to denote the $l^{\rm th}$ component of $\B_t^N$.
We observe that integration by parts yields
\begin{equation}\label{IBP}
  \B_t^{N,l}
  =\int_0^t\int_0^{s_{l-1}}\dots \int_0^{s_2} B_{s_1}\dd s_1\dots\dd s_{l-1}
  =\frac{1}{(l-1)!}\int_0^t(t-s)^{l-1}\dd B_s\;.
\end{equation}
To obtain the first representation for the iterated Kolmogorov
loop of step $N$, we follow a similar line of reasoning
as in~\cite[Section~4.4]{model_class}. 
\begin{propn}\label{propn:hankel_loop}
  Fix $N\in\N$. Let $\alpha_1,\dots,\alpha_N$ be the polynomials on
  $[0,1]$ given, for $t\in[0,1]$, by
  \begin{equation*}
    \alpha_l(t)=
    \sum_{k=1}^N(-1)^{N+k+l+1}(l-1)!\,\binom{N}{k}\binom{N+l-1}{l-1}
    \sum_{m=0}^{k-1}\binom{N-k+m}{l-1}\binom{N+m-1}{m} t^k\;.
  \end{equation*}
  Then the stochastic process $(Z_t^N)_{t\in[0,1]}$ in $\R$ defined by
  \begin{equation*}
    Z_t^N=B_t-\sum_{l=1}^N\alpha_l(t)\B_1^{N,l}
  \end{equation*}
  has the same law as the iterated Kolmogorov loop of step $N$.
\end{propn}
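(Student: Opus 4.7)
The plan is to proceed via standard Gaussian conditioning and then to invert the covariance matrix of $\B_1^N$ explicitly. The opening observation is that $(B_t)_{t\in[0,1]}$ together with the components of $\B_1^N$ form a centred Gaussian family, so the residual process $B_t - \E[B_t\mid\B_1^N]$ is Gaussian and independent of $\B_1^N$. Consequently, the law of $(B_t)_{t\in[0,1]}$ conditioned on $\B_1^N=0$ coincides with that of this residual, and the conditional expectation admits a linear representation $\sum_{l=1}^N \alpha_l(t)\B_1^{N,l}$ whose coefficients are characterised by the normal equations
\begin{equation*}
  \sum_{l=1}^N \alpha_l(t)\,\E[\B_1^{N,l}\B_1^{N,k}] = \E[B_t\B_1^{N,k}]
  \quad\mbox{for }k=1,\dots,N\;.
\end{equation*}

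The next step is to evaluate the covariances explicitly using the integration by parts identity~(\ref{IBP}) together with the It\^o isometry. A short computation yields
\begin{equation*}
  \E[\B_1^{N,l}\B_1^{N,k}] = \frac{1}{(l-1)!(k-1)!(l+k-1)}
  \quad\mbox{and}\quad
  \E[B_t\B_1^{N,k}] = \frac{1-(1-t)^k}{k!}\;.
\end{equation*}
After factoring the diagonal scalings $1/((l-1)!(k-1)!)$ out of the Gram matrix, what remains is the $N\times N$ Hilbert matrix $H$ with $H_{lk} = 1/(l+k-1)$, whose inverse is known in closed form with entries built from binomial coefficients bearing alternating signs. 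Solving the system then gives
\begin{equation*}
  \alpha_l(t) = \sum_{k=1}^N (l-1)!(k-1)!\,(H^{-1})_{lk}\,\frac{1-(1-t)^k}{k!}\;.
\end{equation*}

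Substituting the binomial expansion of $1-(1-t)^k$ produces a double sum in which one index tracks the power of $t$ while the other arises from the Hilbert inverse. The main obstacle is the purely combinatorial step of regrouping this double sum by powers of $t$ so that the inner $m$-summation in the statement, together with the $(-1)^{N+k+l+1}$ prefactor and the binomial factors $\binom{N}{k}\binom{N+l-1}{l-1}$, emerges after simplification. I expect this to reduce to an identity of Chu--Vandermonde type, with the sign pattern coming directly from the sign structure of the Hilbert matrix inverse.

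An alternative route, which bypasses quoting the Hilbert inverse, is to substitute the given $\alpha_l(t)$ directly into the normal equations and verify that $\sum_l \alpha_l(t)/((l-1)!(k-1)!(l+k-1))$ collapses to $(1-(1-t)^k)/k!$ for each $k$; by uniqueness of the solution this would also pin down $\alpha_l(t)$. Either approach ultimately hinges on the same family of binomial identities, so the algebraic core of the proof is unavoidable. Since the resulting $Z_t^N$ is then by construction the orthogonal complement of $B_t$ with respect to $\B_1^N$, it is independent of $\B_1^N$ and therefore realises the iterated Kolmogorov loop of step $N$.
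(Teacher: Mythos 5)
Your probabilistic setup is sound and matches the paper's: the residual $B_t-\E[B_t\mid\B_1^N]$ is Gaussian and independent of $\B_1^N$, so the conditioned process is identified once the coefficients of the linear conditional expectation are computed, and your covariance evaluations
$\E[\B_1^{N,l}\B_1^{N,k}]=\tfrac{1}{(l-1)!\,(k-1)!\,(l+k-1)}$ and $\E[B_t\B_1^{N,k}]=\tfrac{1-(1-t)^k}{k!}$ are correct. The gap is in the step you explicitly defer: you reduce $\alpha_l(t)$ to $\sum_k (l-1)!\,(H^{-1})_{lk}\,\tfrac{1-(1-t)^k}{k}$ with $H$ the Hilbert matrix, and then only state that after expanding $(1-t)^k$ and regrouping by powers of $t$ you ``expect'' the stated coefficients, including the inner $m$-sum $\sum_{m=0}^{k-1}\binom{N-k+m}{l-1}\binom{N+m-1}{m}$ and the sign $(-1)^{N+k+l+1}$, to emerge from a Chu--Vandermonde type identity. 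Since the entire content of the proposition beyond routine Gaussian conditioning is precisely this closed-form expression for $\alpha_l(t)$, leaving that resummation unverified (your alternative of plugging the stated $\alpha_l$ back into the normal equations requires an identity of the same difficulty) means the proof is not complete: it is not evident that the regrouped double sum collapses to the claimed single $m$-sum per power $t^k$, and you give no derivation.

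It is worth seeing how the paper avoids this resummation altogether. Writing $\B_t^N=\int_0^tU(t-s)\dd B_s$ with $(U(r))_k=r^{k-1}/(k-1)!$ and setting $V(t)=\int_0^tU(t-s)U(-s)^T\dd s$, one has $\E[\B_t^N(\B_1^N)^T]=V(t)(\e^A)^T$, so the conditional-expectation coefficient matrix is $V(t)V(1)^{-1}$ rather than the raw Gram-matrix formulation you use. The point of this factorisation is that $(V(t))_{1k}=(-1)^{k-1}t^k/k!$ is a single monomial in $t$, so each power $t^k$ appears exactly once and $\alpha_l(t)=\sum_k(V(t))_{1k}(V(1)^{-1})_{kl}$ reproduces the stated formula verbatim from the known closed-form inverse of the factorial Hankel matrix $V(1)$ (the inner $m$-sum is literally an entry of $V(1)^{-1}$, quoted from the literature); no binomial regrouping is needed. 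To complete your argument along your own route, you would either have to prove the binomial identity you conjecture, or quote an explicit Hilbert-inverse formula and still carry out the regrouping in detail --- or else adopt the paper's factorisation through $V(t)$, which converts the combinatorial step into a citation.
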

\begin{proof}
  Let $A$ be the $N\times N$ matrix and let $E$ be the $N\times 1$
  matrix with entries, for $k,l\in\{1,\dots,N\}$,
  \begin{equation*}
    A_{kl}=
    \begin{cases}
      1 & \mbox{if } k=l+1\\
      0 & \mbox{otherwise}
    \end{cases}
    \quad\mbox{and}\quad
    E_{k}=
    \begin{cases}
      1 & \mbox{if } k=1\\
      0 & \mbox{otherwise}
    \end{cases}\;.
  \end{equation*}
  Using the matrix exponential of a square matrix, we set, for
  $r\in[0,1]$,
  \begin{equation}\label{defn:U}
    U(r)=\e^{rA}E\;,
  \end{equation}
  and we define, for $t\in[0,1]$,
  \begin{equation}\label{defn:V}
    V(t)=\int_0^tU(t-s)U(-s)^T\dd s\;.
  \end{equation}
  Since
  \begin{equation}\label{exp:U}
    \left(U(r)\right)_k=\frac{r^{k-1}}{(k-1)!}\;,
  \end{equation}
  we compute with the help of~\cite[Lemma~3.2]{model_class} that
  \begin{equation*}
    \left(V(t)\right)_{kl}=\frac{1}{(k-1)!\,(l-1)!}
    \int_0^t(t-s)^{k-1}(-s)^{l-1}\dd s
    =(-1)^{l-1}\frac{t^{k+l-1}}{(k+l-1)!}\;.
  \end{equation*}
  We further observe that due to~(\ref{IBP}) and (\ref{exp:U})
  the iterated Kolmogorov
  diffusion of step $N$ can be expressed as
  \begin{equation}\label{itkolwithU}
    \B_t^N=\int_0^tU(t-s)\dd B_s\;.
  \end{equation}
  Let $({\boldsymbol{Z}}_t^N)_{t\in[0,1]}$ be the stochastic process
  in $\R^N$ given by
  \begin{equation}\label{split_exp}
    \B_t^N={\boldsymbol{Z}}_t^N+V(t)V(1)^{-1}\B_1^N\;.
  \end{equation}
  Using the expression~(\ref{itkolwithU}), applying the It\^o isometry
  and recalling the definitions~(\ref{defn:U}) and (\ref{defn:V}),
  we obtain
  \begin{equation*}
    \E\left[\B_t^{N}\left(\B_1^{N}\right)^T\right]
    =\int_0^tU(t-s)U(1-s)^T\dd s
    =V(t)\left(\e^A\right)^T\;.
  \end{equation*}
  It follows that
  \begin{equation*}
    \E\left[{\boldsymbol{Z}}_t^{N}\left(\B_1^{N}\right)^T\right]
    =V(t)\left(\e^A\right)^T
    -V(t)V(1)^{-1}V(1)\left(\e^A\right)^T=0
    \quad\mbox{for all }t\in[0,1]\;.
  \end{equation*}
  Since both the process $({\boldsymbol{Z}}_t^N)_{t\in[0,1]}$ and the
  random variable $\B_1^{N}$ have zero mean and are Gaussian,
  they are uncorrelated which implies that they are independent.
  Therefore, we deduce from~(\ref{split_exp}) that
  $({\boldsymbol{Z}}_t^N)_{t\in[0,1]}$ is the process obtained by
  conditioning $(\B_t^N)_{t\in[0,1]}$ on $\B_1^N=0$, and it suffices
  to show that the first component of $({\boldsymbol{Z}}_t^N)_{t\in[0,1]}$
  is $(Z_t^N)_{t\in[0,1]}$.
  This requires an explicit expression for the inverse
  $\left(V(1)\right)^{-1}$, which is easily derived from the formula
  given in~\cite{inverthankel}. We have
  \begin{equation*}
    \left(V(1)^{-1}\right)_{kl}
    =(-1)^{N+l}(k-1)!\,l!\,\binom{N-1}{k-1}\binom{N+l-1}{l}
    \sum_{m=0}^{k-1}\binom{N-k+m}{l-1}\binom{N+m-1}{m}\;,
  \end{equation*}
  and hence, for all $l\in\{1,\dots,N\}$ and all $t\in[0,1]$,
  we see that
  \begin{equation*}
    \left(V(t)V(1)^{-1}\right)_{1l}
    =\sum_{k=1}^N\left(V(t)\right)_{1k}\left(V(1)^{-1}\right)_{kl}
    =\alpha_l(t)\;.
  \end{equation*}
  Thus, we conclude
  \begin{equation*}
    {\boldsymbol{Z}}_t^{N,1}
    =\B_t^{N,1}-\sum_{l=1}^N\left(V(t)V(1)^{-1}\right)_{1l}\B_1^{N,l}
    =B_t-\sum_{l=1}^N\alpha_l(t)\B_1^{N,l}\;,
  \end{equation*}
  as needed.
\end{proof}
While the representation given in Proposition~\ref{propn:hankel_loop}
is obtained through a straightforward approach,
it appears to be too
complicated to proceed with, amongst others because the
polynomial coefficients $\alpha_1,\dots,\alpha_N$ in
front of the components of $\B_1^N$ change with $N$.
Instead, we use
the representation given in Proposition~\ref{propn:kol_loop_leg} for
our analysis.
\begin{proof}[Proof of Proposition~\ref{propn:kol_loop_leg}]
  Since the shifted Legendre polynomial $Q_n$ is a polynomial of
  degree $n$ which satisfies the parity relation~(\ref{parityQ}), it
  follows from~(\ref{IBP})
  that $\int_0^1Q_n(r)\dd B_r$ can be expressed as a linear combination of
  \begin{equation*}
    B_1,\int_0^1B_{s_1}\dd s_1,
    \int_0^1\int_0^{s_2}B_{s_1}\dd s_1\dd s_2,\dots,
    \int_0^1\int_0^{s_n}\dots \int_0^{s_2} B_{s_1}\dd s_1\dots\dd s_n\;.
  \end{equation*}
  Thus, for $N\in\N$ fixed, there exist polynomials
  $\beta_1,\dots,\beta_N$ on $[0,1]$ such that, for $t\in[0,1]$,
  \begin{equation*}
    \sum_{n=0}^{N-1}(2n+1)\int_0^tQ_n(r)\dd r \int_0^1Q_n(r)\dd B_r
    =\sum_{l=1}^N\beta_l(t)\B_1^{N,l}\;.
  \end{equation*}
  As the process $(L_t^N)_{t\in[0,1]}$ is defined,
  according to~(\ref{expr4L}), by
  \begin{equation*}
    L_t^N=
    B_t-\sum_{n=0}^{N-1}(2n+1)\int_0^tQ_n(r)\dd r \int_0^1Q_n(r)\dd B_r\;,
  \end{equation*}
  we can write
  \begin{equation}\label{bridge_split2}
    B_t=L_t^N+\sum_{l=1}^{N}\beta_l(t)\B_1^{N,l}\;.
  \end{equation}
  Using the It\^o isometry and the orthogonality of the shifted
  Legendre polynomials with~(\ref{orthconst4Q}),
  we obtain from~(\ref{expr4L}) that, for all $t\in[0,1]$ and all
  $m\in\{0,\dots,N-1\}$,
  \begin{equation*}
    \E\left[L_t^N\int_0^1Q_m(r)\dd B_r\right]=
    \int_0^tQ_m(r)\dd r-
    \sum_{n=0}^{N-1}(2n+1)\int_0^tQ_n(r)\dd r \int_0^1Q_n(r)Q_m(r)\dd r=0\;.
  \end{equation*}
  By the completeness of the shifted Legendre polynomials and the
  identity~(\ref{IBP}), this implies that,
  for all $t\in[0,1]$ and all $l\in\{1,\dots,N\}$,
  \begin{equation*}
    \E\left[L_t^N\B_1^{N,l}\right]=0\;.
  \end{equation*}
  Hence, $L_t^N$ and $\B_1^N$ are uncorrelated for all $t\in[0,1]$,
  which due to $(L_t^N)_{t\in[0,1]}$ and $\B_1^N$ both being Gaussian
  shows that $(L_t^N)_{t\in[0,1]}$ and $\B_1^N$ are independent.
  From the representation~(\ref{bridge_split2}), we finally deduce
  that $(L_t^N)_{t\in[0,1]}$ is indeed equal in law to
  the iterated Kolmogorov loop of step $N$.
\end{proof}
The advantage of the representation for the iterated Kolmogorov loops
given in Proposition~\ref{propn:kol_loop_leg} 
over the one given in Proposition~\ref{propn:hankel_loop} is that
by the orthogonality of the shifted Legendre polynomials,
it gives rise to 
a neat expression for the covariance functions of the iterated
Kolmogorov loops.
\begin{lemma}\label{lem:covariance}
  The iterated Kolmogorov loop of step $N\in\N$ is a zero-mean Gaussian
  process with covariance $C_N$ given, for $s,t\in[0,1]$, by
  \begin{equation*}
    C_N(s,t)=
    \min(s,t)-\sum_{n=0}^{N-1}(2n+1)\int_0^sQ_n(r)\dd r\int_0^tQ_n(r)\dd r\;.
  \end{equation*}
\end{lemma}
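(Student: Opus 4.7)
By Proposition~\ref{propn:kol_loop_leg}, it suffices to prove that $(L_t^N)_{t\in[0,1]}$ is a zero-mean Gaussian process with the stated covariance. Since $L_t^N$ is a linear combination of the Gaussian random variables $B_t$ and $\int_0^1 Q_n(r)\dd B_r$ for $n\in\{0,\dots,N-1\}$, the process $(L_t^N)_{t\in[0,1]}$ is Gaussian, and taking expectations in~(\ref{expr4L}) immediately shows that $\E[L_t^N]=0$ for every $t\in[0,1]$.

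The plan for the covariance is to expand the product $L_s^N L_t^N$ from~(\ref{expr4L}) and take expectations term by term, using the It\^o isometry throughout. First, $\E[B_s B_t]=\min(s,t)$. Next, for the cross terms of the form $\E[B_s\int_0^1 Q_n(r)\dd B_r]$, the It\^o isometry gives $\int_0^s Q_n(r)\dd r$. Finally, the orthogonality of the shifted Legendre polynomials together with~(\ref{orthconst4Q}) yields
\begin{equation*}
  \E\left[\int_0^1 Q_n(r)\dd B_r \int_0^1 Q_m(r)\dd B_r\right]
  =\int_0^1 Q_n(r)Q_m(r)\dd r
  =\frac{1}{2n+1}\delta_{nm}\;.
\end{equation*}

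Combining these three computations, the two cross terms each contribute $-\sum_{n=0}^{N-1}(2n+1)\int_0^s Q_n(r)\dd r\int_0^t Q_n(r)\dd r$, while the double sum collapses, thanks to the Kronecker delta, to a single sum equal to $+\sum_{n=0}^{N-1}(2n+1)\int_0^s Q_n(r)\dd r\int_0^t Q_n(r)\dd r$. Adding the four pieces, one copy of this sum survives with a minus sign, giving precisely the claimed expression for $C_N(s,t)$. This is all very routine; the only mildly delicate point is bookkeeping the factor $(2n+1)$ so that it cancels cleanly against the $1/(2n+1)$ produced by the It\^o isometry, but this is a matter of careful algebra rather than a genuine obstacle.
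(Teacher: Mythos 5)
Your proposal is correct and follows essentially the same route as the paper: reduce via Proposition~\ref{propn:kol_loop_leg} to the process $(L_t^N)_{t\in[0,1]}$, then compute the covariance by expanding~(\ref{expr4L}) and applying the It\^o isometry together with the orthogonality relation~(\ref{orthconst4Q}), so that the cross terms and the diagonal double-sum term combine to leave exactly one copy of the sum with a minus sign.
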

\begin{proof}
  By Proposition~\ref{propn:kol_loop_leg}, it suffices to show that
  $(L_t^N)_{t\in[0,1]}$ is a zero-mean Gaussian process with the
  specified covariance function. From the definition~(\ref{expr4L}),
  we see that $(L_t^N)_{t\in[0,1]}$ is a zero-mean
  Gaussian process. Regarding its covariance function,
  the It\^o isometry and the orthogonality of the shifted Legendre
  polynomials with~(\ref{orthconst4Q})
  imply that, for $s,t\in[0,1]$,
  \begin{equation*}
    \E\left[B_s\sum_{n=0}^{N-1}(2n+1)
      \int_0^tQ_n(r)\dd r \int_0^1Q_n(r)\dd B_r\right]
    =\sum_{n=0}^{N-1}(2n+1)\int_0^sQ_n(r)\dd r \int_0^tQ_n(r)\dd r
  \end{equation*}
  as well as
  \begin{align*}
    &\E\left[\left(\sum_{n=0}^{N-1}
        (2n+1)\int_0^sQ_n(r)\dd r \int_0^1Q_n(r)\dd B_r\right)
      \left(\sum_{n=0}^{N-1}
        (2n+1)\int_0^tQ_n(r)\dd r \int_0^1Q_n(r)\dd B_r\right)\right]\\
    &\qquad=\sum_{n=0}^{N-1}
    (2n+1)\int_0^sQ_n(r)\dd r \int_0^tQ_n(r)\dd r\;,
  \end{align*}
  which together with $\E[B_sB_t]=\min(s,t)$ yields
  \begin{equation*}
    C_N(s,t)=\E\left[L_s^NL_t^N\right]
    =\min(s,t)-\sum_{n=0}^{N-1}(2n+1)\int_0^sQ_n(r)\dd r\int_0^tQ_n(r)\dd r\;,
  \end{equation*}
  as required.
\end{proof}
This characterisation of the iterated Kolmogorov loops allows us to
prove Theorem~\ref{thm:LLN} by exploiting the
completeness and orthogonality
of the shifted Legendre polynomials. The argument follows
a line of reasoning which is part of the usual proof of Mercer's
theorem, see~\cite[Part~IV]{mercer}.
\begin{proof}[Proof of Theorem~\ref{thm:LLN}]
  From the discussion in Section~\ref{legendre}, we recall that
  $\{\sqrt{2n+1}Q_n\colon n\in\N_0\}$ forms a complete orthonormal set
  of polynomials in $L^2[0,1]$ with respect to the usual inner
  product. As a consequence, the polarised Parseval identity applies
  to give, for $s,t\in[0,1]$,
  \begin{align}
    \begin{aligned}\label{id:parseval}
      \min(s,t)=\int_0^1\ind_{[0,s]}(r)\ind_{[0,t]}(r)\dd r
      &=\sum_{n=0}^\infty(2n+1)\int_0^1\ind_{[0,s]}(r)Q_n(r)\dd r
      \int_0^1\ind_{[0,t]}(r)Q_n(r)\dd r\\
      &=\sum_{n=0}^\infty(2n+1)\int_0^s Q_n(r)\dd r \int_0^t Q_n(r)\dd r\;.
    \end{aligned}
  \end{align}
  Due to Lemma~\ref{lem:covariance}, it follows that the
  covariance $C_N$ of the iterated Kolmogorov loop of step $N$ is
  given, for $s,t\in[0,1]$, by
  \begin{equation*}
    C_N(s,t)
    =\sum_{n=N}^\infty(2n+1)\int_0^s Q_n(r)\dd r \int_0^t Q_n(r)\dd r\;.
  \end{equation*}
  Using Cauchy-Schwarz, we obtain that, for all $N,M\in\N$ with $N<M$
  and for $s,t\in[0,1]$ fixed,
  \begin{align*}
    &\left|\sum_{n=N}^M(2n+1)\left|\int_0^s Q_n(r)\dd r\right|
      \left|\int_0^t Q_n(r)\dd r\right|\right|^2\\
    &\qquad\leq\sum_{n=N}^M(2n+1)\left(\int_0^s Q_n(r)\dd r\right)^2
    \sum_{n=N}^M(2n+1)\left(\int_0^t Q_n(r)\dd r\right)^2\;,
  \end{align*}
  and thus, by~(\ref{id:parseval}), we have
  \begin{equation*}
    \sum_{n=N}^M(2n+1)\left|\int_0^s Q_n(r)\dd r\right|
    \left|\int_0^t Q_n(r)\dd r\right|\leq\sqrt{st}\;.
  \end{equation*}
  This implies that the series representation for $\min(s,t)$
  in~(\ref{id:parseval}) converges absolutely. In particular, the
  sequence $(G_N)_{N\in\N}$ of functions
  $G_N\colon[0,1]\times[0,1]\to\R$ defined by
  \begin{equation*}
    G_N(s,t)=\sum_{n=N}^\infty(2n+1)\left|\int_0^s Q_n(r)\dd r\right|
    \left|\int_0^t Q_n(r)\dd r\right| 
  \end{equation*}
  converges pointwise to zero as $N\to\infty$. As $(G_N)_{N\in\N}$ is
  a monotonically decreasing sequence of continuous real-valued
  functions, Dini's theorem applies to give that
  $(G_N)_{N\in\N}$ converges uniformly on
  the compact set $[0,1]\times[0,1]$ to 
  the zero function. By the Cauchy criterion, we further deduce that
  the sequence $(C_N)_{N\in\N}$ of covariances
  converges uniformly on $[0,1]\times[0,1]$ to 
  the zero function. As the iterated Kolmogorov loops are
  zero-mean Gaussian processes and since their covariance functions converge
  uniformly as $N\to\infty$ to zero, it follows, e.g.
  by~\cite[Section~3]{gaussian}, that the iterated
  Kolmogorov loops of step $N$ indeed converge weakly as $N\to\infty$
  to the zero process on $\Omega^{0,0}$.
\end{proof}
Note that Proposition~\ref{propn:kol_loop_leg} and
Theorem~\ref{thm:LLN} together show that Brownian motion $(B_t)_{t\in[0,1]}$
admits the decomposition
\begin{equation*}
  \left(\sum_{n=0}^\infty(2n+1)\int_0^tQ_n(r)\dd r
    \int_0^1Q_n(r)\dd B_r\right)_{t\in[0,1]}\;,
\end{equation*}
which differs from the usual Karhunen--Lo{\`e}ve expansion,
cf.~\cite[page~144]{loeve2}, for Brownian motion,
and which alternatively could be expressed
in terms of the representation given in
Proposition~\ref{propn:hankel_loop}.
Foster, Lyons and Oberhauser~\cite{foster}
independently obtained this decomposition with the difference that
the random coefficients of the integrals of the shifted
Legendre polynomials are defined using
the Brownian bridge process
associated with $(B_t)_{t\in[0,1]}$. They use this representation to
generate approximate sample paths of Brownian motion which respect
integration of polynomials up to a fixed degree.
\section{Moment analysis on the diagonal}
\label{moments}
As the first step towards proving Theorem~\ref{thm:CLT4P}, we
establish the convergence of moments on the diagonal.
Throughout, we use the families of complex-valued polynomials
introduced in Section~\ref{complexext} and Section~\ref{general_int}
to simplify the presentation of our analysis. We repeatedly
expand terms into their partial fraction decomposition because this
reveals that certain sums we encounter telescope.
Let $S_N\colon[-1,1]\to\R$ be the
restriction of $R_N$ to the diagonal, that is,
\begin{equation*}
  S_N(x)=R_N(x,x)
  \quad\mbox{for }x\in[-1,1]\;.
\end{equation*}
Due to~(\ref{Iisint}), we can write
\begin{equation*}
  S_N(x)=N\left(1+x-\frac{1}{2}\left((1+x)^2
    +\sum_{n=1}^{N-1}(2n+1)\left(I_n(x)\right)^2\right)\right)
  \quad\mbox{for }x\in[-1,1]\;.
\end{equation*}
To study the moments of $S_N$ in the limit $N\to\infty$, we start by
considering each summand separately. In particular,
for all $k\in\N_0$, we have
\begin{equation}\label{moment0}
  \int_{-1}^1x^{2k}(1+x)\dd x=\frac{2}{2k+1}
  \qquad\mbox{and}\qquad
  \int_{-1}^1x^{2k+1}(1+x)\dd x=\frac{2}{2k+3}
\end{equation}
as well as
\begin{equation}\label{moment1}
  \frac{1}{2}\int_{-1}^1x^{2k}(1+x)^2\dd x
  =\frac{1}{2k+1}+\frac{1}{2k+3}
  \qquad\mbox{and}\qquad
  \frac{1}{2}\int_{-1}^1x^{2k+1}(1+x)^2\dd x=\frac{2}{2k+3}\;.
\end{equation}
The remaining odd moments all vanish.
\begin{lemma}\label{lem:oddmom}
  For all $n\in\N$ and all $k\in\N_0$, we have
  \begin{equation*}
    \int_{-1}^1x^{2k+1}\left(I_n(x)\right)^2\dd x=0\;.
  \end{equation*}
\end{lemma}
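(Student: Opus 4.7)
The plan is to exploit the parity property of $I_n$ given in equation~(\ref{parityI}), which states that $I_n(-x) = (-1)^{n+1} I_n(x)$ for $n \in \N$ and $x \in [-1,1]$. Squaring this relation removes the sign, yielding $(I_n(-x))^2 = (I_n(x))^2$, so $(I_n(x))^2$ is an even function of $x$ on $[-1,1]$, regardless of the parity of $n$.

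Once this is established, the integrand $x^{2k+1} (I_n(x))^2$ is the product of the odd function $x^{2k+1}$ and the even function $(I_n(x))^2$, hence odd. Since $[-1,1]$ is symmetric about the origin, the integral of any integrable odd function vanishes, which gives the claim.

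There is no real obstacle: the entire argument is a one-line symmetry consideration based on (\ref{parityI}). The only thing worth noting is why we need to restrict to $n \in \N$ rather than $n \in \Z$ or $n = 0$: for $n = 0$ the polynomial $I_0(x) = x - \im$ does not satisfy a clean parity relation on $[-1,1]$, so the squared integrand would not be even. For $n \in \N$, on the other hand, the parity of $P_{n+1} - P_{n-1}$ alternates consistently, which is exactly the content of (\ref{parityI}).
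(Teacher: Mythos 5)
Your proof is correct and follows exactly the paper's argument: by the parity relation~(\ref{parityI}), $I_n^2$ is even for $n\in\N$, so the integrand $x^{2k+1}(I_n(x))^2$ is odd and the integral over the symmetric interval $[-1,1]$ vanishes. The additional remark on why $n=0$ must be excluded is accurate but not needed for the statement as given.
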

\begin{proof}
  By the parity property~(\ref{parityI}), we know that $I_n^2$ is
  an even function on $[-1,1]$ for all $n\in\N$. Therefore, the integrand
  of the above integral is an odd function on $[-1,1]$, and
  it follows that the integral vanishes.
\end{proof}
We are left with studying the remaining even moments, which
is the core of our moment analysis. The recursive method we develop
requires us to look at additional moments to the ones we would like to
consider.
For all $p,q\in\Z$ and $k\in\N_0$, we set
\begin{equation*}
  m_{p,q}^k=(p+q+1)\int_{-1}^1x^{2k}I_p(x)I_q(x)\dd x\;.
\end{equation*}
These moments satisfy the following recursion formula.
This is the first time where the extension of the family of the
integrals of the Legendre polynomials comes in handy as we do not have to
deal with a boundary at $n=0$.
\begin{lemma}\label{lem:mom_rec}
  For all $p,q\in\Z$ and for $k\in\N$, we have
  \begin{align*}
    m_{p,q}^k
    &=\frac{(p+q+1)(p+2)(q+2)}{(2p+1)(2q+1)(p+q+3)}m_{p+1,q+1}^{k-1}
     +\frac{(p+q+1)(p-1)(q-1)}{(2p+1)(2q+1)(p+q-1)}m_{p-1,q-1}^{k-1}\\
    &\qquad+\frac{(p+2)(q-1)}{(2p+1)(2q+1)}m_{p+1,q-1}^{k-1}
      +\frac{(p-1)(q+2)}{(2p+1)(2q+1)}m_{p-1,q+1}^{k-1}\;.
  \end{align*}
\end{lemma}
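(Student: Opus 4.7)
\begin{proof2}[Proposal]
The plan is to reduce the exponent $2k$ by two using the recursion formula from Lemma~\ref{lem:rec4I}, applied once to each copy of the polynomial $I_n$. Specifically, I would rewrite
\begin{equation*}
  m_{p,q}^k=(p+q+1)\int_{-1}^1x^{2k-2}\left(xI_p(x)\right)\left(xI_q(x)\right)\dd x\;,
\end{equation*}
which is valid because $k\geq 1$, and then use Lemma~\ref{lem:rec4I} in the rearranged form
\begin{equation*}
  xI_n(x)=\frac{(n+2)I_{n+1}(x)+(n-1)I_{n-1}(x)}{2n+1}
\end{equation*}
to substitute for both $xI_p(x)$ and $xI_q(x)$. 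Here the fact that the recursion in Lemma~\ref{lem:rec4I} is valid for every $n\in\Z$, with no special treatment at $n=0$, is precisely what makes the formula apply uniformly for all $p,q\in\Z$.

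Next I would expand the resulting product, obtaining four cross-terms of the form $I_{p\pm 1}(x)I_{q\pm 1}(x)$, each multiplied by the explicit prefactor $\frac{1}{(2p+1)(2q+1)}$ times a product of index-dependent coefficients such as $(p+2)(q+2)$, $(p+2)(q-1)$, etc. After interchanging sum and integral, each of the four integrals $\int_{-1}^1 x^{2k-2}I_{p'}(x)I_{q'}(x)\dd x$ is rewritten in terms of $m_{p',q'}^{k-1}$ via the defining identity $m_{p',q'}^{k-1}=(p'+q'+1)\int_{-1}^1 x^{2k-2}I_{p'}(x)I_{q'}(x)\dd x$, which contributes a denominator of $p'+q'+1$. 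So, for instance, the $I_{p+1}I_{q+1}$ contribution acquires the denominator $p+q+3$ and the $I_{p-1}I_{q-1}$ contribution acquires $p+q-1$, whereas the two mixed terms both pick up $p+q+1$, which cancels against the overall factor $(p+q+1)$ in front.

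Collecting the four terms then yields exactly the claimed recursion, as the coefficients match term by term. The argument is essentially mechanical once the key substitution is made, so there is no serious obstacle; the only point requiring a little care is to track the possible vanishing of the denominators $p+q\pm 1$ or $p+q+3$, but whenever any such denominator vanishes the corresponding moment $m_{p',q'}^{k-1}$ also vanishes identically (since its defining prefactor $p'+q'+1$ is zero), so the identity holds as a limiting relation that is in any case an equality of polynomial expressions in $p$ and $q$. I would therefore present the calculation with the understanding that the formula is to be read in this formal sense when a denominator degenerates.
\end{proof2}
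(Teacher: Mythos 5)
Your argument is exactly the paper's: apply the recursion of Lemma~\ref{lem:rec4I} to both $xI_p(x)$ and $xI_q(x)$ after splitting off $x^2$ from $x^{2k}$, expand the product, and identify the four resulting integrals with the moments $m_{p\pm1,q\pm1}^{k-1}$ via their defining prefactors. Your extra remark on how to read the identity when a denominator $p+q\pm1$ or $p+q+3$ degenerates is a sensible clarification (the paper leaves this implicit, and in its applications $p+q$ is always even), but it does not change the fact that the proof coincides with the one in the paper.
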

\begin{proof}
  According to Lemma~\ref{lem:rec4I}, we have both
  \begin{align*}
    (p+2)I_{p+1}(x)&=(2p+1)xI_p(x)-(p-1)I_{p-1}(x)
    \quad\mbox{for }x\in[-1,1]\\
  \intertext{and}
    (q+2)I_{q+1}(x)&=(2q+1)xI_q(x)-(q-1)I_{q-1}(x)
    \quad\mbox{for }x\in[-1,1]\;.
  \end{align*}
  It follows that
  \begin{align*}
    &(2p+1)(2q+1)\int_{-1}^1x^{2k}I_p(x)I_q(x)\dd x\\
    &\qquad=\int_{-1}^1x^{2k-2}
      \left((p+2)I_{p+1}(x)+(p-1)I_{p-1}(x)\right)
      \left((q+2)I_{q+1}(x)+(q-1)I_{q-1}(x)\right)\dd x\;,
  \end{align*}
  which yields the desired result.
\end{proof}
Moreover, we have the partial fraction decompositions specified below.
\begin{propn}\label{propn:PFD}
  There exists a family
  $\{b_{a,k}^l\in\R\colon a\in\Z\mbox{ and }k,l\in\N_0\}$ of
  coefficients satisfying
  \begin{equation}\label{PFD:sym}
    b_{a,k}^l=b_{-a,k}^l
    \quad\mbox{for all }a\in\Z\mbox{ and }k,l\in\N_0
  \end{equation}
  as well as 
  \begin{equation}\label{PFD:bound}
    b_{0,k}^{a-1}+2a\sum_{l=0}^k\frac{b_{a,k}^l}{l+1}=0
    \quad\mbox{and}\quad
    \frac{b_{c,k}^{a-1}}{a}=\frac{b_{a,k}^{c-1}}{c}
    \quad\mbox{for all }a,c\in\N\mbox{ and }k\in\N_0\;,
  \end{equation}
  such that, for all $n,a\in\Z$ and all $k\in\N_0$,
  \begin{equation}\label{PFD}
    m_{n-a,n+a}^k=\sum_{l=0}^k\frac{b_{a,k}^l}{2n-2l-1}-
    \sum_{l=0}^k\frac{b_{a,k}^l}{2n+2l+3}\;,
  \end{equation}
  where it is understood that $b_{a,k}^l=0$ if $l>k$.
\end{propn}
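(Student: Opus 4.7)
The plan is to prove the entire statement by strong induction on $k \in \mathbb{N}_0$, with the partial fraction decomposition (\ref{PFD}) together with the symmetry (\ref{PFD:sym}) and both identities (\ref{PFD:bound}) all forming the induction hypothesis. For the base case $k = 0$, I would substitute $(2n + 1) I_n = P_{n+1} - P_{n-1}$ and apply the orthogonality (\ref{scale4Z}) of the extended Legendre polynomials to compute $m_{n, n}^0 = 4/[(2n-1)(2n+3)]$, $m_{n \mp 1, n \pm 1}^0 = -2/[(2n-1)(2n+3)]$, and $m_{n-a, n+a}^0 = 0$ for $|a| \geq 2$. A direct partial fraction decomposition then yields $b_{0, 0}^0 = 1$ and $b_{\pm 1, 0}^0 = -1/2$, with all other coefficients zero; the symmetry and both identities are verified by inspection.

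For the inductive step, substituting $p = n - a$ and $q = n + a$ into Lemma~\ref{lem:mom_rec} writes $m_{n-a, n+a}^k$ as a sum of four terms $A_i(n) \, m_{n' - a', n' + a'}^{k-1}$, where $(n', a')$ ranges over the four nearest neighbours of $(n, a)$ in $\mathbb{Z}^2$ and each $A_i$ is an explicit rational function of $n$. Expanding every moment factor by the induction hypothesis and collecting terms yields the partial fraction decomposition of the sum together with explicit linear formulas for $b_{a, k}^l$ in terms of the $b_{a', k-1}^{l'}$. Two features are then automatic: the equality of residues at the paired poles $n = (2l+1)/2$ and $n = -(2l+3)/2$ follows from the invariance $m_{n-a, n+a}^k = m_{-n-1-a, -n-1+a}^k$, a consequence of Lemma~\ref{Isym} combined with the $(p+q+1)$ prefactor; and the symmetry $b_{a, k}^l = b_{-a, k}^l$ is inherited from $m_{p, q}^k = m_{q, p}^k$, which simply swaps $A_3$ and $A_4$.

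The central technical challenge is controlling spurious poles that appear in individual summands but must cancel in the total. The denominators $(2n - 2a + 1)(2n + 2a + 1)$ in every $A_i$, together with the $(2n \pm 3)$, $(2n \mp 1)$ factors in $A_1, A_2$ and the level-$(k-1)$ $m$-factors shifted by $n \mapsto n \pm 1$, produce potential poles outside the set $\{(2l+1)/2, -(2l+3)/2 : l = 0, \ldots, k\}$ --- most notably a double pole at $n = -1/2$ when $a = 0$, and simple poles at $n = \pm(2a-1)/2$ when these lie beyond the permitted range. A Laurent expansion of the four summands in $\epsilon = 2n + 1$ around $n = -1/2$ shows that the $\epsilon^{-2}$ and $\epsilon^{-1}$ coefficients must both vanish, and these two cancellation conditions should reduce, after algebra, to the two identities (\ref{PFD:bound}) at level $k - 1$, which are available by the induction hypothesis.

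The remaining work is to verify that the newly obtained $b_{a, k}^l$ themselves satisfy (\ref{PFD:bound}), so that the induction closes. The first identity can be rewritten equivalently as $b_{0, k}^{a-1} = 2a \cdot m_{n-a, n+a}^k \big|_{n = -1/2}$ (by evaluating the partial fraction decomposition at the formal point $n = -1/2$), and I would attempt to prove it by comparing the residue of $m_{n, n}^k$ at $n = (2a-1)/2$ with the value of the rational expansion of $m_{n-a, n+a}^k$ at $n = -1/2$ computed via the explicit coefficient recursion derived in the inductive step. The second identity encodes a subtler $(a, c)$ symmetry that must be tracked through the coefficient recursion on the $b$'s; this combinatorial bookkeeping is the most delicate part of the argument and is where I expect the structure hinted at in Remark~\ref{rem:catalan} to enter.
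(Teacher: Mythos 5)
Your overall strategy coincides with the paper's: induction on $k$, the base case computed from the orthogonality of the extended Legendre polynomials, the inductive step via Lemma~\ref{lem:mom_rec} with $p=n-a$, $q=n+a$, the Heaviside cover-up method, the symmetry $m_{n-a,n+a}^k=m_{-n-1-a,-n-1+a}^k$ from Lemma~\ref{Isym} to pair the residues at $n=l+\tfrac12$ and $n=-l-\tfrac32$, and the use of~(\ref{PFD:bound}) at level $k-1$ to kill the potential double poles (the paper's~(\ref{term_vanishes}) and~(\ref{no_blow_up})). One small difference: rather than demanding that the $\epsilon^{-1}$ coefficient at $n=-1/2$ also cancel algebraically from the level-$(k-1)$ identities, the paper simply allows a residual simple-pole term $d_{0,k}/(2n+1)$ in the cover-up ansatz and then eliminates it (together with fixing $c_{0,k}^l=-b_{0,k}^l$) by the $n\mapsto-n-1$ symmetry; your Laurent-expansion route is plausible but more laborious and is not actually carried out.

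The genuine gap is the closing step of the induction: you must prove that the newly constructed $b_{a,k}^l$ satisfy \emph{both} identities in~(\ref{PFD:bound}), because the next inductive step needs exactly these to cancel its double poles. Here you only offer a plan (``I would attempt to prove it by comparing \dots'', ``combinatorial bookkeeping \dots where I expect the structure hinted at in Remark~\ref{rem:catalan} to enter''), and the Catalan-triangle hint is a red herring: that structure only appears later, for the sums $B_{a,k}$ in Proposition~\ref{propn:rec_rel4B}, and plays no role in closing this induction. The paper's key device, which your proposal is missing, is to unify the two identities into a single symmetric statement: define, for $a,c\in\N_0$,
\begin{equation*}
  d_{a,k}^c=\left.\frac{(2n-2c+1)\,m_{n-a,n+a}^k}{2n+1}\right|_{n=c-1/2}\;,
\end{equation*}
so that $d_{a,k}^0=-\sum_{l}b_{a,k}^l/(l+1)$ and $d_{a,k}^c=b_{a,k}^{c-1}/(2c)$ for $c\in\N$, and~(\ref{PFD:bound}) becomes precisely $d_{a,k}^c=d_{c,k}^a$. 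Applying Lemma~\ref{lem:mom_rec} to these evaluations directly (not to the explicit recursions for the $b$'s) produces two four-term expressions, (\ref{boundrel2}) and (\ref{boundrel1}), whose summands match pairwise under the level-$(k-1)$ symmetry $d_{a,k-1}^c=d_{c,k-1}^a$, which yields the symmetry at level $k$. Without this (or an equivalent argument), your induction does not close, so as written the proof is incomplete at its most essential point.
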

\begin{proof}
  The proof works by induction on $k\in\N_0$. For the base case,
  we start by observing that the orthogonality of the Legendre polynomials
  and the definition~(\ref{defn:complexL}) imply that
  \begin{equation}\label{genorth4P}
    \int_{-1}^1P_n(x)P_m(x)\dd x = 0
    \quad\mbox{if }n\not=m \mbox{ and }n\not= -m-1\;.
  \end{equation}
  In particular, the integral vanishes if $n\not=m$ but $n+m$ is even.
  Using~(\ref{defn:I}) and (\ref{scale4Z}) we compute that,
  for all $n\in\Z$,
  \begin{align*}
    m_{n,n}^0=(2n+1)\int_{-1}^1\left(I_n(x)\right)^2\dd x
    &=\frac{1}{2n+1}\int_{-1}^1\left(P_{n+1}(x)-P_{n-1}(x)\right)^2\dd x\\
    &=\frac{1}{2n+1}\left(\frac{2}{2n-1}+\frac{2}{2n+3}\right)
      =\frac{1}{2n-1}-\frac{1}{2n+3}\;,
  \end{align*}
  and similarly,
  \begin{equation*}
    m_{n-1,n+1}^0=m_{n+1,n-1}^0
    =(2n+1)\int_{-1}^1I_{n-1}(x)I_{n+1}(x)\dd x
    =-\frac{1}{2}\left(\frac{1}{2n-1}-\frac{1}{2n+3}\right)
  \end{equation*}
  as well as
  \begin{equation*}
    m_{n-a,n+a}^0=0
    \quad\mbox{for all }a\in\Z\setminus\{-1,0,1\}\;.
  \end{equation*}
  Hence, for $k=0$, the moments are indeed of the form~(\ref{PFD})
  with the only non-zero coefficients
  \begin{equation}\label{coeff4k0}
    b_{0,0}^0=1
    \quad\mbox{and}\quad
    b_{1,0}^0=b_{-1,0}^0=-\frac{1}{2}\;.
  \end{equation}
  In particular, we have $b_{0,0}^0+2b_{1,0}^0=0$ and the
  relations~(\ref{PFD:sym}) as well as (\ref{PFD:bound}) are satisfied
  for $k=0$, which settles the base case.
  For the induction step, we start with $a=0$. Applying
  Lemma~\ref{lem:mom_rec} and the induction hypothesis yields
  \begin{align}\label{PFD_mom_rec}
    \begin{aligned}
      m_{n,n}^k
      &=\frac{(n+2)^2}{(2n+1)(2n+3)}
      \left(\sum_{l=0}^{k-1}\frac{b_{0,k-1}^l}{2n-2l+1}-
        \sum_{l=0}^{k-1}\frac{b_{0,k-1}^l}{2n+2l+5}\right)\\
      &\qquad+\frac{(n-1)^2}{(2n-1)(2n+1)}
      \left(\sum_{l=0}^{k-1}\frac{b_{0,k-1}^l}{2n-2l-3}-
        \sum_{l=0}^{k-1}\frac{b_{0,k-1}^l}{2n+2l+1}\right)\\
      &\qquad+\frac{2(n-1)(n+2)}{(2n+1)^2}
      \left(\sum_{l=0}^{k-1}\frac{b_{1,k-1}^l}{2n-2l-1}-
        \sum_{l=0}^{k-1}\frac{b_{1,k-1}^l}{2n+2l+3}\right)\;.
    \end{aligned}
  \end{align}
  Through this expression, we can define what it means to evaluate
  $(2n+1)^2m_{n,n}^k$ at $n=-1/2$. By additionally
  using the relation~(\ref{PFD:bound}) of the induction hypothesis, we
  obtain that
  \begin{equation}\label{term_vanishes}
    \left.(2n+1)^2m_{n,n}^k\right|_{n=-1/2}
    =\frac{9}{4}b_{0,k-1}^0+\frac{9}{2}\sum_{l=0}^{k-1}\frac{b_{1,k-1}^l}{l+1}
    =\frac{9}{4}\left(b_{0,k-1}^0+2\sum_{l=0}^{k-1}\frac{b_{1,k-1}^l}{l+1}\right)
    =0\;.
  \end{equation}
  Since the remaining factors in the denominators of the terms giving
  $m_{n,n}^k$ only ever appear linearly, we deduce
  from~(\ref{PFD_mom_rec}) as well as
  (\ref{term_vanishes}), and by referring to
  the Heaviside cover-up method that $m_{n,n}^k$ is of the form
  \begin{equation}\label{basic_PFD}
    m_{n,n}^k=\sum_{l=0}^k\frac{b_{0,k}^l}{2n-2l-1}+
    \sum_{l=0}^k\frac{c_{0,k}^l}{2n+2l+3}+\frac{d_{0,k}}{2n+1}\;,
  \end{equation}
  for suitable coefficients
  $b_{0,k}^l,c_{0,k}^l,d_{0,k}\in\R$. By Lemma~\ref{Isym}, we
  further have
  \begin{equation}\label{mom_sym}
    m_{-n-1,-n-1}^k
    =-(2n+1)\int_{-1}^1x^{2k}\left(I_{-n-1}(x)\right)^2\dd x
    =m_{n,n}^k
    \quad\mbox{for all }n\in\N\;.
  \end{equation}
  However, according to~(\ref{basic_PFD}), we can write
  \begin{equation*}
    m_{-n-1,-n-1}^k=-\sum_{l=0}^k\frac{c_{0,k}^l}{2n-2l-1}
    -\sum_{l=0}^k\frac{b_{0,k}^l}{2n+2l+3}-\frac{d_{0,k}}{2n+1}\;,
  \end{equation*}
  and as a consequence of~(\ref{mom_sym}), it follows that
  \begin{equation*}
    d_{0,k}=0
    \quad\mbox{and}\quad
    c_{0,k}^l=-b_{0,k}^l
    \quad\mbox{for }l\in\{0,\dots,k\}\;,
  \end{equation*}
  which gives the desired form~(\ref{PFD}) for $a=0$.
  Similarly, for $a=\pm 1$, we use
  \begin{align*}
    \begin{aligned}
      m_{n-1,n+1}^k=m_{n+1,n-1}^k
      &=\frac{(2n+1)(n+1)(n+3)}{(2n-1)(2n+3)^2}
      \left(\sum_{l=0}^{k-1}\frac{b_{1,k-1}^l}{2n-2l+1}-
        \sum_{l=0}^{k-1}\frac{b_{1,k-1}^l}{2n+2l+5}\right)\\
      &\qquad+
      \frac{(2n+1)(n-2)n}{(2n-1)^2(2n+3)}
      \left(\sum_{l=0}^{k-1}\frac{b_{1,k-1}^l}{2n-2l-3}-
        \sum_{l=0}^{k-1}\frac{b_{1,k-1}^l}{2n+2l+1}\right)\\
      &\qquad+
      \frac{n(n+1)}{(2n-1)(2n+3)}
      \left(\sum_{l=0}^{k-1}\frac{b_{0,k-1}^l}{2n-2l-1}-
        \sum_{l=0}^{k-1}\frac{b_{0,k-1}^l}{2n+2l+3}\right)\\
      &\qquad+
      \frac{(n-2)(n+3)}{(2n-1)(2n+3)}
      \left(\sum_{l=0}^{k-1}\frac{b_{2,k-1}^l}{2n-2l-1}-
        \sum_{l=0}^{k-1}\frac{b_{2,k-1}^l}{2n+2l+3}\right)
    \end{aligned}
  \end{align*}
  to give a meaning to
  \begin{align*}
    \left.(2n-1)^2m_{n-1,n+1}^k\right|_{n=1/2}
    &=\left.(2n+3)^2m_{n-1,n+1}^k\right|_{n=-3/2}\\
    &=\frac{3}{16}
      \left(b_{0,k-1}^0+2\sum_{l=0}^{k-1}\frac{b_{1,k-1}^l}{l+1}\right)
      +\frac{21}{16}\left(\frac{b_{1,k-1}^1}{2}-b_{2,k-1}^0\right)\;,
  \end{align*}
  whereas, for $a\in\Z\setminus\{-1,0,1\}$, we have
  \begin{align*}
      m_{n-a,n+a}^{k}
      &=\frac{(2n+1)(n-a+2)(n+a+2)}{(2n-2a+1)(2n+2a+1)(2n+3)}
      \left(\sum_{l=0}^{k-1}\frac{b_{a,k-1}^l}{2n-2l+1}-
        \sum_{l=0}^{k-1}\frac{b_{a,k-1}^l}{2n+2l+5}\right)\\
      &\qquad+
      \frac{(2n+1)(n-a-1)(n+a-1)}{(2n-2a+1)(2n+2a+1)(2n-1)}
      \left(\sum_{l=0}^{k-1}\frac{b_{a,k-1}^l}{2n-2l-3}-
        \sum_{l=0}^{k-1}\frac{b_{a,k-1}^l}{2n+2l+1}\right)\\
      &\qquad+
      \frac{(n-a+2)(n+a-1)}{(2n-2a+1)(2n+2a+1)}
      \left(\sum_{l=0}^{k-1}\frac{b_{a-1,k-1}^l}{2n-2l-1}-
        \sum_{l=0}^{k-1}\frac{b_{a-1,k-1}^l}{2n+2l+3}\right)\\
      &\qquad+
      \frac{(n-a-1)(n+a+2)}{(2n-2a+1)(2n+2a+1)}
      \left(\sum_{l=0}^{k-1}\frac{b_{a+1,k-1}^l}{2n-2l-1}-
        \sum_{l=0}^{k-1}\frac{b_{a+1,k-1}^l}{2n+2l+3}\right)
  \end{align*}
  to make sense of
  \begin{align*}
    &\left.(2n-2a+1)^2m_{n-a,n+a}^{k}\right|_{n=a-1/2}
    =\left.(2n+2a+1)^2m_{n-a,n+a}^{k}\right|_{n=-a-1/2}\\
    &\qquad =\frac{3(4a+3)}{16}
      \left(\frac{b_{a,k-1}^a}{a+1}-\frac{b_{a+1,k-1}^{a-1}}{a}\right)
      +\frac{3(4a-3)}{16}
      \left(\frac{b_{a-1,k-1}^{a-1}}{a}-\frac{b_{a,k-1}^{a-2}}{a-1}\right)\;.
  \end{align*}
  By the
  relation~(\ref{PFD:bound}) of the induction hypothesis, it follows that,
  for all $a\in\Z$,
  \begin{equation}\label{no_blow_up}
    \left.(2n-2a+1)^2m_{n-a,n+a}^{k}\right|_{n=a-1/2}
    =\left.(2n+2a+1)^2m_{n-a,n+a}^{k}\right|_{n=-a-1/2}=0\;.
  \end{equation}
  Hence, as before, we use the expression for the moments, the
  Heaviside cover-up method and the symmetry property
  \begin{equation*}
    m_{-n-1-a,-n-1+a}^k
    =-(2n+1)\int_{-1}^1x^{2k}I_{-(n+a)-1}(x)I_{-(n-a)-1}(x)\dd x
    =m_{n-a,n+a}^k
    \quad\mbox{for } n\geq a+1
  \end{equation*}
  to deduce that we indeed have the partial fraction
  decomposition~(\ref{PFD}). The symmetry relation~(\ref{PFD:sym})
  is satisfied since
  \begin{equation*}
    m_{n-a,n+a}^{k}=m_{n+a,n-a}^{k}
    \quad\mbox{for all }n,a\in\Z\;.
  \end{equation*}
  To conclude the
  induction step, we still need to show that (\ref{PFD:bound}) holds.
  As we have just established that the moments are of the
  form~(\ref{PFD}), we are
  justified to define, for $a,c\in\N_0$,
  \begin{equation*}
    d_{a,k}^c=\left.\frac{(2n-2c+1)m_{n-a,n+a}^k}{2n+1}\right|_{n=c-1/2},
  \end{equation*}
  where it is understood that
  \begin{equation*}
    d_{a,k}^0=\left.m_{n-a,n+a}^k\right|_{n=-1/2}
    =-\sum_{l=0}^k\frac{b_{a,k}^l}{l+1}\;,
  \end{equation*}
  and where, for $c\in\N$, we have
  \begin{equation*}
    d_{a,k}^c=\frac{b_{a,k}^{c-1}}{2c}\;.
  \end{equation*}
  Thus, the relation~(\ref{PFD:bound}) of the induction hypothesis
  tells us that
  \begin{equation}\label{re_indhyp}
    d_{c,k-1}^a=d_{a,k-1}^c
    \quad\mbox{for all }a,c\in\N_0\;.
  \end{equation}
  Using Lemma~\ref{lem:mom_rec}, we obtain
  \begin{align}\label{boundrel2}
    \begin{aligned}
      d_{c,k}^a
      &=\frac{(2a-2c+3)(2a+2c+3)}{16(a-c)(a+c)}d_{c,k-1}^{a+1}
      +\frac{(2a-2c-3)(2a+2c-3)}{16(a-c)(a+c)}d_{c,k-1}^{a-1}\\
      &\qquad+
      \frac{(2a-2c+3)(2a+2c-3)}{16(a-c)(a+c)}d_{c-1,k-1}^a
      +\frac{(2a-2c-3)(2a+2c+3)}{16(a-c)(a+c)}d_{c+1,k-1}^a
    \end{aligned}
  \end{align}
  as well as
  \begin{align}\label{boundrel1}
    \begin{aligned}
      d_{a,k}^c
      &=\frac{(2c-2a+3)(2c+2a+3)}{16(c-a)(c+a)}d_{a,k-1}^{c+1}
      +\frac{(2c-2a-3)(2c+2a-3)}{16(c-a)(c+a)}d_{a,k-1}^{c-1}\\
      &\qquad+
      \frac{(2c-2a+3)(2c+2a-3)}{16(c-a)(c+a)}d_{a-1,k-1}^c
      +\frac{(2c-2a-3)(2c+2a+3)}{16(c-a)(c+a)}d_{a+1,k-1}^c\;.
    \end{aligned}
  \end{align}
  Due to~(\ref{re_indhyp}), the first summand on the right
  hand side of~(\ref{boundrel2}) agrees with the fourth summand on the
  right hand side of~(\ref{boundrel1}). Similarly, the second summand
  in~(\ref{boundrel2}) coincides with the third summand
  in~(\ref{boundrel1}). As the remaining terms also match, we
  see that
  \begin{equation*}
    d_{c,k}^a=d_{a,k}^c
    \quad\mbox{for all }a,c\in\N_0\;,
  \end{equation*}
  which implies the relation~(\ref{PFD:bound}) and concludes the
  proof.
\end{proof}
By a more in-depth analysis than the one performed in the proof of
Proposition~\ref{propn:PFD}, it is possible to use the Heaviside
cover-up method to obtain recurrence relations for the coefficients
$b_{a,k}^l$ which characterise them uniquely. However, as it is not
necessary for our subsequent analysis to determine each
coefficient $b_{a,k}^l$
separately, we postpone the derivation of recursion formulae to
the Appendix.
In the following, we see that to study the moments of $S_N$
in the limit
$N\to\infty$ it suffices to gain control over, for
$a\in\Z$ and $k\in\N_0$,
\begin{equation*}
  B_{a,k}=\sum_{l=0}^k(l+1)b_{a,k}^l\;.
\end{equation*}

These sums satisfy a much simpler recurrence relation than the
coefficients $b_{a,k}^l$ themselves, where
$B_{a,k}=B_{-a,k}$ as a result of
the symmetry property~(\ref{PFD:sym}).
\begin{propn}\label{propn:rec_rel4B}
  For all $k\in\N$ and all $a\in\Z$, we have
  \begin{equation}\label{rec4B}
    B_{a,k}=\frac{1}{4}B_{a-1,k-1}+\frac{1}{2}B_{a,k-1}+\frac{1}{4}B_{a+1,k-1}\;.
  \end{equation}
\end{propn}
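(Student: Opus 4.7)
The plan is to identify $B_{a,k}$ as the leading-order asymptotic coefficient of the moment $m_{n-a,n+a}^k$ in the limit $n\to\infty$, and then simply pass to this limit in the moment recursion of Lemma~\ref{lem:mom_rec}. The whole argument reduces to a short asymptotic calculation once this identification is made.

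The first step is to combine the two sums in the partial fraction decomposition~(\ref{PFD}) over a common denominator. Since
\begin{equation*}
  \frac{1}{2n-2l-1}-\frac{1}{2n+2l+3}=\frac{4(l+1)}{(2n-2l-1)(2n+2l+3)}\;,
\end{equation*}
one obtains
\begin{equation*}
  m_{n-a,n+a}^k=\sum_{l=0}^k\frac{4(l+1)b_{a,k}^l}{(2n-2l-1)(2n+2l+3)}\;,
\end{equation*}
and inspecting the behaviour of each summand as $n\to\infty$ yields
\begin{equation*}
  B_{a,k}=\lim_{n\to\infty} n^2\, m_{n-a,n+a}^k\;.
\end{equation*}
This is the key observation: $B_{a,k}$ is characterised entirely in terms of large-$n$ asymptotics of the moments, without explicit reference to the individual coefficients $b_{a,k}^l$.

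The second step is to apply Lemma~\ref{lem:mom_rec} with $p=n-a$ and $q=n+a$, so that $p+q+1=2n+1$, $2p+1=2n-2a+1$ and $2q+1=2n+2a+1$. The four moment terms on the right-hand side are of the form $m_{n'-a',n'+a'}^{k-1}$ with $(n',a')$ equal to $(n+1,a)$, $(n-1,a)$, $(n,a-1)$ and $(n,a+1)$ respectively. A direct inspection of the rational prefactors shows that, after multiplication by $n^2$, each one tends to $\tfrac{1}{4}$ as $n\to\infty$, since in every case numerator and denominator are both polynomials of degree three in $n$ whose leading coefficients sit in ratio $1/4$. Using the asymptotic identification from the previous step, which is stable under the shifts $n\mapsto n\pm 1$ because $n^2/(n\pm 1)^2\to 1$, each of the four terms contributes $\tfrac{1}{4}B_{a',k-1}$. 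Summing the four contributions, the two $(n\pm 1,a)$ terms combine to give $\tfrac{1}{2}B_{a,k-1}$ and the remaining terms give $\tfrac{1}{4}B_{a-1,k-1}$ and $\tfrac{1}{4}B_{a+1,k-1}$, producing exactly the claimed recursion.

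There is no real obstacle here; the only trap would be to attempt the recursion directly at the level of the individual coefficients $b_{a,k}^l$, which would require the explicit Heaviside cover-up computations that the author has deliberately postponed to the appendix. The asymptotic viewpoint circumvents this entirely and turns the proof into a degree count in the prefactors of Lemma~\ref{lem:mom_rec}.
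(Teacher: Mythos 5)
Your proposal is correct and follows essentially the same route as the paper: the paper likewise combines the partial fractions to get $m_{n-a,n+a}^k=\sum_{l=0}^k\frac{4(l+1)b_{a,k}^l}{(2n-2l-1)(2n+2l+3)}$, identifies $B_{a,k}=\lim_{n\to\infty}n^2m_{n-a,n+a}^k$ (together with the same limits for the shifted prefactors $(n\pm1)^2$), and then passes to the limit in Lemma~\ref{lem:mom_rec} with $p=n-a$, $q=n+a$, each prefactor tending to $\tfrac14$. The only quibble is the remark that all four prefactors are ratios of cubics in $n$ --- the two cross terms are ratios of quadratics --- but this does not affect the conclusion that each tends to $\tfrac14$.
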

\begin{proof}
  Using the partial fraction decomposition~(\ref{PFD}) of
  Proposition~\ref{propn:PFD} and
  \begin{equation*}
    \frac{1}{2n-2l-1}-\frac{1}{2n+2l+3}
    =\frac{4(l+1)}{(2n-2l-1)(2n+2l+3)}\;,
  \end{equation*}
  we deduce that
  \begin{equation}\label{sumconv1}
    \lim_{n\to\infty}n^2m_{n-a,n+a}^k
    =\lim_{n\to\infty}\sum_{l=0}^k\frac{4n^2(l+1)}{(2n-2l-1)(2n+2l+3)}b_{a,k}^l
    =B_{a,k}\;,
  \end{equation}
  and similarly,
  \begin{equation}\label{sumconv2}
    \lim_{n\to\infty}(n-1)^2m_{n-a,n+a}^k
    =\lim_{n\to\infty}(n+1)^2m_{n-a,n+a}^k=B_{a,k}\;.
  \end{equation}
  On the other hand, by applying Lemma~\ref{lem:mom_rec} with $p=n-a$
  and $q=n+a$, we obtain that
  \begin{align*}
    \lim_{n\to\infty}n^2 m_{n-a,n+a}^k
    &=\frac{1}{4}\lim_{n\to\infty}n^2 m_{n-a+1,n+a+1}^{k-1}
     +\frac{1}{4}\lim_{n\to\infty}n^2 m_{n-a-1,n+a-1}^{k-1}\\
    &\qquad+\frac{1}{4}\lim_{n\to\infty}n^2 m_{n-a+1,n+a-1}^{k-1}
      +\frac{1}{4}\lim_{n\to\infty}n^2 m_{n-a-1,n+a+1}^{k-1}\;,
  \end{align*}
  which together with (\ref{sumconv1}) and (\ref{sumconv2}) implies
  the claimed recurrence relation.
\end{proof}
\begin{remark}\label{rem:catalan}
  The recurrence relation~(\ref{rec4B})
  in Proposition~\ref{propn:rec_rel4B} can be
  rewritten as
  \begin{equation*}
    4^kB_{a,k}
    =4^{k-1}B_{a-1,k-1}+2\left(4^{k-1}B_{a,k-1}\right)+4^{k-1}B_{a+1,k-1}
    \quad\mbox{for }k\in\N\mbox{ and }a\in\Z\;,
  \end{equation*}
  that is, the numbers $4^kB_{a,k}$ satisfy the same recurrence
  relation as elements of the Catalan triangle which Shapiro
  introduced in~\cite{shapiro} and as elements of other Catalan
  triangles, e.g. see~\cite{catalan1,catalan2}.
\end{remark}
By the preceding remark, it should not come as a surprise that we
encounter the Catalan numbers when determining the sums
$B_{a,k}$ explicitly.
For $k\in\N_0$, the $k^{\rm th}$ Catalan number $C_k$ is given by
\begin{equation*}
  C_k=\frac{1}{k+1}\binom{2k}{k}
  =\binom{2k}{k}-\binom{2k}{k+1}\;.
\end{equation*}
In the next lemma, it is understood that
\begin{equation*}
  \binom{k}{l}=0
  \quad\mbox{if }k,l\in\N_0\mbox{ with }k<l\;.
\end{equation*}
\begin{lemma}\label{lem:catalan}
  We have $B_{0,0}=1$ and, for $a,k\in\N_0$ with $a+k\geq 1$,
  \begin{equation}\label{exp4B}
    B_{-a,k}=B_{a,k}=4^{-k}\left[\binom{2k}{k+a}-\frac{1}{2}
      \left[\binom{2k}{k+a-1}+\binom{2k}{k+a+1}\right]\right]\;.
  \end{equation}
  In particular, we see that
  \begin{equation}\label{B2catalan}
    B_{0,k}=4^{-k}C_k
    \quad\mbox{for all }k\in\N_0\;.
  \end{equation}
\end{lemma}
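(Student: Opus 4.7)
The plan is to prove the lemma by induction on $k\in\N_0$, verifying separately that the claimed closed form matches the initial data at $k=0$ and satisfies the three-term recurrence of Proposition~\ref{propn:rec_rel4B}. For the base case, I would invoke the explicit list of coefficients established in the proof of Proposition~\ref{propn:PFD}, equation~(\ref{coeff4k0}): since the only nonzero coefficients at $k=0$ are $b_{0,0}^0=1$ and $b_{\pm1,0}^0=-\tfrac12$, the definition $B_{a,0}=\sum_{l=0}^0(l+1)b_{a,0}^l=b_{a,0}^0$ gives $B_{0,0}=1$, $B_{\pm1,0}=-\tfrac12$, and $B_{a,0}=0$ for $|a|\geq 2$. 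Substituting $k=0$ into the right-hand side of~(\ref{exp4B}) reproduces exactly these values (the entries $\binom{0}{j}$ vanish for $j\neq 0$).

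For the inductive step, write $\widetilde{B}_{a,k}$ for the candidate on the right-hand side of~(\ref{exp4B}). The symmetry $\widetilde{B}_{-a,k}=\widetilde{B}_{a,k}$ is immediate from $\binom{2k}{k+a}=\binom{2k}{k-a}$, matching $B_{-a,k}=B_{a,k}$ which follows from~(\ref{PFD:sym}). The crux of the argument is to verify that $\widetilde{B}_{a,k}$ satisfies the same recurrence~(\ref{rec4B}) as $B_{a,k}$. Applying Pascal's rule twice yields
\begin{equation*}
  \binom{2k}{k+a}=\binom{2(k-1)}{(k-1)+(a-1)}+2\binom{2(k-1)}{(k-1)+a}+\binom{2(k-1)}{(k-1)+(a+1)}\;,
\end{equation*}
and shifting $a$ by $\pm 1$ gives the analogous identities for $\binom{2k}{k+a-1}$ and $\binom{2k}{k+a+1}$. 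Taking the linear combination that defines $4^k\widetilde{B}_{a,k}$ and collecting terms shows directly that
\begin{equation*}
  4^k\widetilde{B}_{a,k}=4^{k-1}\widetilde{B}_{a-1,k-1}+2\cdot 4^{k-1}\widetilde{B}_{a,k-1}+4^{k-1}\widetilde{B}_{a+1,k-1}\;,
\end{equation*}
which is precisely the recurrence~(\ref{rec4B}) after division by $4^k$. Since the recurrence together with the initial data at $k=0$ determines the sequence uniquely, the inductive step concludes $B_{a,k}=\widetilde{B}_{a,k}$.

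For the final claim~(\ref{B2catalan}), specialise $a=0$ and use the symmetry $\binom{2k}{k-1}=\binom{2k}{k+1}$ to obtain
\begin{equation*}
  B_{0,k}=4^{-k}\left[\binom{2k}{k}-\binom{2k}{k+1}\right]=4^{-k}C_k\;.
\end{equation*}
No step is a serious obstacle; the only substantive computation is the two-fold application of Pascal's rule, and this is exactly the structural reason for the coincidence with the Catalan triangle already highlighted in Remark~\ref{rem:catalan}.
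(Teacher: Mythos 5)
Your proposal is correct and follows essentially the same route as the paper: fix the complete row of values at $k=0$ and then check that the closed form satisfies the recurrence~(\ref{rec4B}), which propagates uniquely from level $k-1$ to level $k$. The only cosmetic differences are that you read off the full $k=0$ row (including $B_{a,0}=0$ for $|a|\geq 2$) directly from~(\ref{coeff4k0}) instead of the paper's additional orthogonality argument giving $B_{a,k}=0$ for $k\leq a-2$, and you verify the recurrence by a self-contained double application of Pascal's rule rather than by citing the recurrence for the combinatorial numbers $C_{m,l}$ from the Catalan-triangle literature.
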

\begin{proof}
  By the recursion formula in Lemma~\ref{lem:rec4I}
  and the definition~(\ref{defn:I}), the polynomial $x^kI_{n-a}(x)$ is
  a linear combination of the polynomials
  \begin{equation}\label{indices1}
    P_{n-a-k-1}(x),P_{n-a-k+1}(x),\dots,P_{n-a+k-1}(x),P_{n-a+k+1}(x)\;,
  \end{equation}
  and similarly, $x^kI_{n+a}(x)$ is a linear combination of
  \begin{equation}\label{indices2}
    P_{n+a-k-1}(x),P_{n+a-k+1}(x),\dots,P_{n+a+k-1}(x),P_{n+a+k+1}(x)\;.
  \end{equation}
  Hence, if $n-a+k+1<n+a-k-1$, that is, if $k<a-1$, it follows
  from~(\ref{genorth4P}) that
  \begin{equation*}
    m_{n-a,n+a}^k=(2n+1)\int_{-1}^1x^{2k}I_{n-a}(x)I_{n+a}(x)\dd x=0
  \end{equation*}
  because all indices in~(\ref{indices1}) and (\ref{indices2})
  have the same parity. We deduce that
  \begin{equation*}
    B_{-a,k}=B_{a,k}=0
    \quad\mbox{if }k\leq a-2\;,
  \end{equation*}
  which is consistent with~(\ref{exp4B}). We further obtain
  from~(\ref{coeff4k0}) that
  \begin{equation*}
    B_{0,0}=1
    \quad\mbox{and}\quad
    B_{-1,0}=B_{1,0}=-\frac{1}{2}\;,
  \end{equation*}
  as claimed. Since this fixes the
  boundary values of our recursion, it suffices to verify
  that~(\ref{exp4B}) satisfies the recurrence
  relation~(\ref{rec4B}). This can be done by observing that the
  combinatorial numbers $C_{m,l}$ defined,
  for $m\in\N$ and $l\in\N_0$, by
  \begin{equation*}
    C_{m,l}=\frac{m-2l}{m}\binom{m}{l}
  \end{equation*}
  satisfy the recurrence relation
  \begin{equation}\label{rec4C}
    C_{m+2,l+1}=C_{m,l-1}+2 C_{m,l}+C_{m,l+1}
    \quad\mbox{for }m,l\in\N\;,
  \end{equation}
  see~\cite[Proposition~2.1]{catalan1}, and by noting that
  \begin{equation*}
    C_{2k+1,k+a}=\binom{2k}{k+a}-\binom{2k}{k+a-1}
    \quad\mbox{and}\quad
    C_{2k+1,k+a+1}=\binom{2k}{k+a+1}-\binom{2k}{k+a}\;.
  \end{equation*}
  Thus, the recurrence relation~(\ref{rec4B}) is a consequence
  of~(\ref{rec4C}), and we obtain that
  \begin{equation*}
    4^kB_{a,k}=\frac{1}{2}\left(C_{2k+1,k+a}-C_{2k+1,k+a+1}\right)\;.
  \end{equation*}
  Finally, we conclude that, for $k\in\N$,
  \begin{equation*}
    B_{0,k}=4^{-k}\left[\binom{2k}{k}-\frac{1}{2}
      \left[\binom{2k}{k-1}+\binom{2k}{k+1}\right]\right]
    =4^{-k}\left[\binom{2k}{k}-\binom{2k}{k+1}\right]
    =4^{-k}C_k\;,
  \end{equation*}
  which together with $B_{0,0}=1=C_0$ establishes~(\ref{B2catalan}).
\end{proof}
We need one more identity to determine the moments of $S_N$ in the
limit $N\to\infty$. This is where the discrepancy between $I_0(x)$ and
$\int_{-1}^xP_0(z)\dd z$ becomes useful.
\begin{lemma}\label{lem:idat0}
  For all $k\in\N_0$, we have
  \begin{equation*}
    \sum_{l=0}^k\left(\frac{1}{2l+1}+\frac{1}{2l+3}\right)b_{0,k}^l
    =\frac{2}{2k+1}-\frac{2}{2k+3}\;.
  \end{equation*}
\end{lemma}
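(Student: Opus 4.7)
The plan is to evaluate the partial fraction decomposition~(\ref{PFD}) of Proposition~\ref{propn:PFD} at $n=0$ and $a=0$, and to independently compute $m_{0,0}^k$ directly from its definition. Comparing the two expressions will yield the desired identity.

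First, substituting $n=0$ and $a=0$ into~(\ref{PFD}) gives
\begin{equation*}
  m_{0,0}^k = \sum_{l=0}^k \frac{b_{0,k}^l}{-2l-1} - \sum_{l=0}^k \frac{b_{0,k}^l}{2l+3}
  = -\sum_{l=0}^k \left(\frac{1}{2l+1}+\frac{1}{2l+3}\right) b_{0,k}^l\;,
\end{equation*}
so that it suffices to show
\begin{equation*}
  m_{0,0}^k = \frac{2}{2k+3}-\frac{2}{2k+1}\;.
\end{equation*}

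Second, I would compute $m_{0,0}^k$ directly. This is where the discrepancy flagged in the paragraph after~(\ref{Iisint}) is essential: by the extended definitions~(\ref{defn:complexL}) and~(\ref{defn:I}), we have
\begin{equation*}
  I_0(x) = P_1(x)-P_{-1}(x) = x-\im\;,
\end{equation*}
rather than the naive antiderivative $\int_{-1}^x P_0(z)\dd z = 1+x$. Expanding $(x-\im)^2 = x^2-2\im x-1$ and noting that the middle term integrates against $x^{2k}$ to zero by parity leaves
\begin{equation*}
  m_{0,0}^k = \int_{-1}^1 x^{2k}\left(I_0(x)\right)^2\dd x
  = \int_{-1}^1 x^{2k+2}\dd x - \int_{-1}^1 x^{2k}\dd x
  = \frac{2}{2k+3}-\frac{2}{2k+1}\;,
\end{equation*}
which matches the evaluation above and gives the lemma.

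There is no genuine obstacle once the right two observations are made; the only point requiring care is the very one the text highlights. Had we instead defined $I_0(x) = \int_{-1}^x P_0(z)\dd z = 1+x$, the direct computation would yield $\frac{2}{2k+3}+\frac{2}{2k+1}$ rather than $\frac{2}{2k+3}-\frac{2}{2k+1}$, so that the identity obtained from the partial fraction decomposition would no longer match. The extended definition of $I_0$ via the recursion~(\ref{defn:I}) is precisely what makes~(\ref{PFD}) consistent at $n=0$ and thereby drives this short proof.
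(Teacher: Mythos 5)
Your proposal is correct and coincides with the paper's own argument: both equate the partial fraction decomposition~(\ref{PFD}) evaluated at $n=a=0$ with the direct computation $m_{0,0}^k=\int_{-1}^1x^{2k}(x-\im)^2\dd x=\frac{2}{2k+3}-\frac{2}{2k+1}$, exploiting that $I_0(x)=x-\im$ rather than $1+x$. Your closing remark about the role of the extended definition of $I_0$ is accurate and matches the discussion preceding the lemma in the text.
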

\begin{proof}
  According to the partial fraction decomposition~(\ref{PFD}) in
  Proposition~\ref{propn:PFD}, we know
  \begin{equation*}
    m_{0,0}^k
    =-\sum_{l=0}^k\left(\frac{1}{2l+1}+\frac{1}{2l+3}\right)b_{0,k}^l\;.
  \end{equation*}
  On the other hand, since $I_0(x)=x-\im$ for $x\in[-1,1]$, we compute
  explicitly that
  \begin{equation*}
    m_{0,0}^k=\int_{-1}^1x^{2k}\left(I_0(x)\right)^2\dd x
    =\int_{-1}^1x^{2k}\left(x-\im\right)^2\dd x
    =\frac{2}{2k+3}-\frac{2}{2k+1}\;,
  \end{equation*}
  and the claimed result follows.
\end{proof}
We can finally describe the moments of $S_N$ in the limit $N\to\infty$.
\begin{propn}\label{propn:mom_conv}
  For all $k\in\N_0$, we have
  \begin{equation*}
    \int_{-1}^1x^{2k+1}S_N(x)\dd x=0
    \quad\mbox{for all }N\in\N\;,
  \end{equation*}
  and
  \begin{equation*}
    \lim_{N\to\infty}\int_{-1}^1x^{2k}S_N(x)\dd x
    =\frac{1}{2}\left(4^{-k}C_k\right)\;.
  \end{equation*}
\end{propn}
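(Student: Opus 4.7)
The plan is to substitute the explicit formula for $S_N$ and handle the odd and even moments separately. For the odd case $m=2k+1$, identities~(\ref{moment0}) and~(\ref{moment1}) give $N\int_{-1}^1 x^{2k+1}(1+x)\dd x=\frac{2N}{2k+3}$ and $\frac{N}{2}\int_{-1}^1 x^{2k+1}(1+x)^2\dd x=\frac{2N}{2k+3}$, so these two contributions cancel exactly; Lemma~\ref{lem:oddmom} then eliminates every $\int_{-1}^1 x^{2k+1}(I_n(x))^2\dd x$. Thus the odd moments of $S_N$ vanish identically in $N$.

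For the even case $m=2k$, the $(1+x)$ and $(1+x)^2$ contributions combine to $N\bigl(\frac{1}{2k+1}-\frac{1}{2k+3}\bigr)$, so
\begin{equation*}
  \int_{-1}^1 x^{2k}S_N(x)\dd x=N\left(\frac{1}{2k+1}-\frac{1}{2k+3}\right)-\frac{N}{2}\sum_{n=1}^{N-1}m_{n,n}^k.
\end{equation*}
Substituting the partial fraction decomposition of Proposition~\ref{propn:PFD}, I would exploit a telescoping obtained by index shifts: for each $l$, the sum $\sum_{n=1}^{N-1}\bigl(\frac{1}{2n-2l-1}-\frac{1}{2n+2l+3}\bigr)$ collapses to the difference of boundary strips $\sum_{m=1-l}^{l+2}\frac{1}{2m-1}-\sum_{m=N-l}^{N+l+1}\frac{1}{2m-1}$. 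Pairing $m$ with $1-m$ in the low-end strip cancels everything except $\frac{1}{2l+1}+\frac{1}{2l+3}$, so that
\begin{equation*}
  \sum_{n=1}^{N-1}m_{n,n}^k=\sum_{l=0}^kb_{0,k}^l\left(\frac{1}{2l+1}+\frac{1}{2l+3}\right)-\sum_{l=0}^kb_{0,k}^l\sum_{m=N-l}^{N+l+1}\frac{1}{2m-1}.
\end{equation*}

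Applying Lemma~\ref{lem:idat0}, the low-end contribution equals $\frac{2}{2k+1}-\frac{2}{2k+3}$, whose $-\frac{N}{2}$-multiple exactly cancels the initial $N\bigl(\frac{1}{2k+1}-\frac{1}{2k+3}\bigr)$ term. The $O(N)$ behaviour therefore disappears and one is left with
\begin{equation*}
  \int_{-1}^1 x^{2k}S_N(x)\dd x=\frac{N}{2}\sum_{l=0}^kb_{0,k}^l\sum_{m=N-l}^{N+l+1}\frac{1}{2m-1}.
\end{equation*}
The inner sum has $2l+2$ terms, each equal to $\frac{1}{2N}+O(N^{-2})$, giving $\frac{l+1}{N}+O(N^{-2})$. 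Multiplying by $\frac{N}{2}$ and summing over $l$ yields $\frac{1}{2}\sum_{l=0}^k(l+1)b_{0,k}^l=\frac{1}{2}B_{0,k}$, which by Lemma~\ref{lem:catalan} equals $\frac{1}{2}\cdot 4^{-k}C_k$, the claimed limit.

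The delicate point is the telescoping bookkeeping: verifying that the low-end residue collapses to precisely $\frac{1}{2l+1}+\frac{1}{2l+3}$, in exactly the shape that Lemma~\ref{lem:idat0} can absorb. This is where the convention $I_0(x)=x-\im$ rather than $\int_{-1}^xP_0(z)\dd z$ pays off: it is the discrepancy between these two that produces the right hand side of Lemma~\ref{lem:idat0}, which in turn is the key to killing the linear-in-$N$ divergence and isolating the Catalan number from the $O(1/N)$ correction.
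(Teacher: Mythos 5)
Your proposal is correct and follows essentially the same route as the paper: vanishing of odd moments via Lemma~\ref{lem:oddmom} and the cancellation in~(\ref{moment0})--(\ref{moment1}), then for the even moments the partial fraction decomposition of Proposition~\ref{propn:PFD}, a telescoping of $\sum_{n=1}^{N-1}\bigl(\frac{1}{2n-2l-1}-\frac{1}{2n+2l+3}\bigr)$ leaving the residue $\frac{1}{2l+1}+\frac{1}{2l+3}$ plus a boundary strip of $2l+2$ terms near $n=N$, cancellation of the $O(N)$ part via Lemma~\ref{lem:idat0}, and identification of the limit as $\frac{1}{2}B_{0,k}=\frac{1}{2}4^{-k}C_k$ by Lemma~\ref{lem:catalan}. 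Your index-shift presentation of the telescoping is only cosmetically different from the paper's splitting of the sum, so there is nothing further to add.
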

\begin{proof}
  Using Lemma~\ref{lem:oddmom} and the odd moments in~(\ref{moment0})
  and (\ref{moment1}), we obtain that
  \begin{equation*}
    \int_{-1}^1x^{2k+1}S_N(x)\dd x
    =N\left(\frac{2}{2k+3}-\frac{2}{2k+3}\right)=0
    \quad\mbox{for all }N\in\N\;,
  \end{equation*}
  as claimed. To determine the limit of the even
  moments, we fix $k\in\N_0$ and throughout, choose $N$
  sufficiently large. For $l\in\N_0$, we rewrite
  \begin{equation*}
    \sum_{n=1}^{N-1}\left(\frac{1}{2n-2l-1}-\frac{1}{2n+2l+3}\right)
    =\sum_{n=1}^{2l}\frac{1}{2n-2l-1}
    +\sum_{n=2l+1}^{N-1}\frac{1}{2n-2l-1}-\sum_{n=1}^{N-1}\frac{1}{2n+2l+3}\;,
  \end{equation*}
  and observe that
  \begin{equation*}
    \sum_{n=1}^{2l}\frac{1}{2n-2l-1}
    =\sum_{n=1}^{l}\frac{1}{2n-2l-1}+\sum_{n=1}^{l}\frac{1}{2(2l-n+1)-2l-1}
    =0
  \end{equation*}
  as well as
  \begin{align*}
    \sum_{n=2l+1}^{N-1}\frac{1}{2n-2l-1}-\sum_{n=1}^{N-1}\frac{1}{2n+2l+3}
    &=\sum_{n=1}^{N-2l-1}\frac{1}{2n+2l-1}-\sum_{n=1}^{N-1}\frac{1}{2n+2l+3}\\
    &=\frac{1}{2l+1}+\frac{1}{2l+3}-\sum_{n=N-2l-2}^{N-1}\frac{1}{2n+2l+3}
  \end{align*}
  to deduce that
  \begin{equation*}
    \sum_{n=1}^{N-1}\left(\frac{1}{2n-2l-1}-\frac{1}{2n+2l+3}\right)
    =\frac{1}{2l+1}+\frac{1}{2l+3}
    -\sum_{n=1}^{2l+2}\frac{1}{2N+2n-2l-3}\;.
  \end{equation*}  
  Applying Proposition~\ref{propn:PFD} and rearranging sums further
  yields
  \begin{align*}
    \sum_{n=1}^{N-1}m_{n,n}^k
    &=\sum_{n=1}^{N-1}\left(\sum_{l=0}^k\frac{b_{0,k}^l}{2n-2l-1}-
      \sum_{l=0}^k\frac{b_{0,k}^l}{2n+2l+3}\right)\\
    &=\sum_{l=0}^k \sum_{n=1}^{N-1}
      \left(\frac{1}{2n-2l-1}-\frac{1}{2n+2l+3}\right)b_{0,k}^l\\
    &=\sum_{l=0}^k\left(\frac{1}{2l+1}+\frac{1}{2l+3}\right)b_{0,k}^l
      -\sum_{l=0}^k\sum_{n=1}^{2l+2}\frac{b_{0,k}^l}{2N+2n-2l-3}\;.
  \end{align*}
  The even moments in~(\ref{moment0}) and (\ref{moment1}) as well as
  Lemma~\ref{lem:idat0} imply that
  \begin{equation*}
    \int_{-1}^1x^{2k}S_N(x)\dd x
    =N\left(\frac{1}{2k+1}-\frac{1}{2k+3}
      -\frac{1}{2}\sum_{n=1}^{N-1}m_{n,n}^k\right)
    =\frac{N}{2}\sum_{l=0}^k\sum_{n=1}^{2l+2}\frac{b_{0,k}^l}{2N+2n-2l-3}\;.
  \end{equation*}
  Finally, by~(\ref{B2catalan}) of Lemma~\ref{lem:catalan}, it follows
  that
  \begin{equation*}
    \lim_{N\to\infty}\int_{-1}^1x^{2k}S_N(x)\dd x
    =\frac{1}{2}\sum_{l=0}^k\sum_{n=1}^{2l+2}\frac{b_{0,k}^l}{2}
    =\frac{1}{2}\sum_{l=0}^k(l+1)b_{0,k}^l
    =\frac{1}{2}B_{0,k}=\frac{1}{2}\left(4^{-k}C_k\right)\;,
  \end{equation*}
  as required.
\end{proof}
The main result of this section is that the moments of $S_N$ converge
as $N\to\infty$ to the moments of a scaled semicircle.
\begin{propn}\label{propn:moments}
  Let $S\colon[-1,1]\to\R$ be given by
  \begin{equation*}
    S(x)=\frac{1}{\pi}\sqrt{1-x^2}
    \quad\mbox{for }x\in[-1,1]\;.
  \end{equation*}
  Then, for all $k\in\N_0$, we have
  \begin{equation*}
    \lim_{N\to\infty}\int_{-1}^1x^{k}S_N(x)\dd x
    =\int_{-1}^1x^{k}S(x)\dd x\;.
  \end{equation*}
\end{propn}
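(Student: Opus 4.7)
My plan is to reduce the statement immediately to Proposition~\ref{propn:mom_conv}, which has already done all the work of computing the limiting moments of $S_N$, and then to verify that the moments of the scaled semicircle density $S$ agree with the values found there. Concretely, Proposition~\ref{propn:mom_conv} gives
\[
\int_{-1}^1 x^{2k+1}S_N(x)\dd x = 0
\quad\text{and}\quad
\lim_{N\to\infty}\int_{-1}^1 x^{2k}S_N(x)\dd x = \tfrac{1}{2}\bigl(4^{-k}C_k\bigr)\;,
\]
so it suffices to prove that $\int_{-1}^1 x^{2k+1}S(x)\dd x = 0$ and $\int_{-1}^1 x^{2k}S(x)\dd x = \tfrac{1}{2}(4^{-k}C_k)$ for every $k\in\N_0$.

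The odd moments of $S$ vanish by symmetry: $S$ is an even function on $[-1,1]$, so $x^{2k+1}S(x)$ is odd and integrates to zero over $[-1,1]$, which matches the corresponding identity from Proposition~\ref{propn:mom_conv}.

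For the even moments I would use the substitution $x=\cos\theta$ on $[0,\pi]$, which reduces $\int_{-1}^1 x^{2k}\sqrt{1-x^2}\dd x$ to the standard trigonometric integral $\int_0^\pi\cos^{2k}\theta\sin^2\theta\dd\theta$. Evaluating this via the Beta function $B(k+\tfrac{1}{2},\tfrac{3}{2})=\Gamma(k+\tfrac{1}{2})\Gamma(\tfrac{3}{2})/\Gamma(k+2)$, together with the standard identity $\Gamma(k+\tfrac{1}{2})=(2k)!\sqrt{\pi}/(4^k k!)$, and then dividing by $\pi$, produces exactly $\tfrac{1}{2}\bigl(4^{-k}C_k\bigr)$. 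This matches the limiting value supplied by Proposition~\ref{propn:mom_conv} and concludes the proof.

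Honestly, there is no real obstacle here: the substantive work was the emergence of the Catalan numbers in Lemma~\ref{lem:catalan} and Proposition~\ref{propn:mom_conv}, whose recurrence $4^k B_{a,k}$ is that of a Catalan triangle and whose diagonal values $4^{-k}C_k$ are known to be the moments of the Wigner semicircle. The present proposition merely records the other half of this Catalan-moment coincidence, so the proof is essentially a direct identification after a Beta integral.
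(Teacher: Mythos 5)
Your proposal is correct. The overall structure is exactly the paper's: reduce everything to Proposition~\ref{propn:mom_conv}, dispose of the odd moments of $S$ by parity, and then check that the even moments of $S$ equal $\tfrac{1}{2}(4^{-k}C_k)$. The only point where you diverge is in how those even moments are evaluated. The paper (following the reference it cites for the semicircle moments) substitutes $x=\sin\theta$ and integrates by parts twice to obtain the recursion $4^{k+1}\int_{-1}^1x^{2k+2}S(x)\dd x=\tfrac{2(2k+1)}{k+2}\,4^{k}\int_{-1}^1x^{2k}S(x)\dd x$ with base case $\int_{-1}^1S(x)\dd x=\tfrac12$, and then identifies the answer through the Catalan recurrence $C_{k+1}=\tfrac{2(2k+1)}{k+2}C_k$; you instead compute the moment in closed form via $x=\cos\theta$, the Beta integral $B\bigl(k+\tfrac12,\tfrac32\bigr)$, and the identity $\Gamma\bigl(k+\tfrac12\bigr)=(2k)!\sqrt{\pi}/(4^kk!)$, which indeed yields $\tfrac12\,4^{-k}C_k$ directly. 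Both computations are standard and correct; your Beta-function route avoids the induction and gives the value in one step, while the paper's recursive route stays within elementary integration by parts and makes the structural match with the Catalan recurrence (the same recurrence underlying Lemma~\ref{lem:catalan}) explicit rather than incidental.
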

\begin{proof}
  It suffices to show that the moments of $S$
  are consistent with Proposition~\ref{propn:mom_conv}. Since $S$ is
  an even function on $[-1,1]$, we certainly have
  \begin{equation*}
    \int_{-1}^1x^{2k+1}S(x)\dd x=0
    \quad\mbox{for all }k\in\N_0\;.
  \end{equation*}
  Regarding the even moments, we follow~\cite[Section~2.1.1]{guionnet}
  and use the change of variable $x=\sin(\theta)$ where
  $\theta\in[-\pi/2,\pi/2]$ to write, for $k\in\N_0$,
  \begin{equation*}
    \int_{-1}^1x^{2k+2}S(x)\dd x
    =\frac{1}{\pi}\int_{-\pi/2}^{\pi/2}
    \sin^{2k+2}(\theta)\cos^2(\theta)\dd\theta\;.
  \end{equation*}
  By integration by parts, we have
  \begin{equation*}
    \int_{-\pi/2}^{\pi/2}\sin^{2k+3}(\theta)\sin(\theta)\dd\theta
    =\int_{-\pi/2}^{\pi/2}(2k+3)\sin^{2k+2}(\theta)\cos^2(\theta)\dd\theta\;,
  \end{equation*}
  and using $\cos^2(\theta)=1-\sin^2(\theta)$, we obtain
  \begin{equation*}
    \int_{-1}^1x^{2k+2}S(x)\dd x
    =\frac{1}{\pi}\int_{-\pi/2}^{\pi/2}\sin^{2k+2}(\theta)\dd\theta
    -(2k+3)\int_{-1}^1x^{2k+2}S(x)\dd x\;.
  \end{equation*}
  This together with applying integration by parts a second time
  implies that
  \begin{equation*}
    4^{k+1}\int_{-1}^1x^{2k+2}S(x)\dd x
    =\frac{4^{k+1}}{2k+4}\left(\frac{1}{\pi}
    \int_{-\pi/2}^{\pi/2}\sin^{2k+2}(\theta)\dd\theta\right)
    =\frac{2(2k+1)}{k+2}\left(4^k\int_{-1}^1x^{2k}S(x)\dd x\right)\;.
  \end{equation*}
  Since
  \begin{equation*}
    \int_{-1}^1S(x)\dd x
    =\frac{1}{\pi}\int_{-\pi/2}^{\pi/2}\cos^2(\theta)\dd\theta
    =\frac{1}{2}=\frac{1}{2} C_0
  \end{equation*}
  and as the Catalan numbers satisfy the recurrence relation
  \begin{equation*}
    C_{k+1}=\frac{2(2k+1)}{k+2}C_k\;,
  \end{equation*}
  it follows that
  \begin{equation*}
    4^k\int_{-1}^1x^{2k}S(x)\dd x=\frac{1}{2} C_k\;,
  \end{equation*}
  as needed.
\end{proof}
\section{Fluctuations for iterated Kolmogorov loops}
\label{fluctuations}
We establish a Christoffel--Darboux type formula for the integrals of
Legendre polynomials and put this together with the asymptotic
behaviours discussed in Section~\ref{legendre} as well as the moment
analysis performed in Section~\ref{moments} to prove
Theorem~\ref{thm:CLT4P}. Using the expression for the iterated
Kolmogorov loops given in Proposition~\ref{propn:kol_loop_leg}
and determined in Section~\ref{kol_loops},
we finally deduce Theorem~\ref{thm:CLT}.

The Christoffel--Darboux formula for Legendre polynomials,
see~\cite[Remark~5.2.2]{roy},
which is due to Christoffel~\cite{christoffel} and
Darboux~\cite{darboux}, states that, for $N\in\N$ and
$x,y\in[-1,1]$,
\begin{equation*}
  (x-y)\sum_{n=0}^N(2n+1)P_n(x)P_n(y)
  =(N+1)\left(P_{N+1}(x)P_N(y)-P_N(x)P_{N+1}(y)\right)\;.
\end{equation*}
The second identity in the lemma below can be considered as 
a Christoffel--Darboux type formula for the integrals of Legendre
polynomials.
\begin{propn}\label{propn:chris4I}
  Fix $x,y\in[-1,1]$ and set, for $n\in\Z$,
  \begin{equation}\label{defn:D}
    D_{n+1}(x,y)=I_{n+1}(x)I_n(y)-I_n(x)I_{n+1}(y)\;.
  \end{equation}
  Then we have
  \begin{equation}\label{CDstep1}
    (n+2)D_{n+1}(x,y)=(x-y)(2n+1)I_n(x)I_n(y)+(n-1)D_n(x,y)\;,
  \end{equation}
  and, for all $N\in\N$,
  \begin{equation}\label{CDformula}
    (x-y)\sum_{n=1}^N(2n+1)I_n(x)I_n(y)
    =N D_{N+1}(x,y)+2\sum_{n=1}^ND_{n+1}(x,y)\;.
  \end{equation}
\end{propn}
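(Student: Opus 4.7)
The plan is to establish identity~(\ref{CDstep1}) directly from the extended recursion for $I_n$ given in Lemma~\ref{lem:rec4I}, and then obtain identity~(\ref{CDformula}) by summing (\ref{CDstep1}) from $n=1$ to $n=N$ and performing a telescoping rearrangement.

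For the first step, I apply Lemma~\ref{lem:rec4I} separately at the two points $x$ and $y$ to get
\begin{align*}
  (n+2)I_{n+1}(x)&=(2n+1)xI_n(x)-(n-1)I_{n-1}(x)\;,\\
  (n+2)I_{n+1}(y)&=(2n+1)yI_n(y)-(n-1)I_{n-1}(y)\;.
\end{align*}
Multiplying the first equation by $I_n(y)$ and the second by $I_n(x)$, then subtracting, the terms of the form $(2n+1)xI_n(x)I_n(y)$ and $(2n+1)yI_n(x)I_n(y)$ combine to $(x-y)(2n+1)I_n(x)I_n(y)$, while the remaining contribution is $-(n-1)\bigl(I_{n-1}(x)I_n(y)-I_n(x)I_{n-1}(y)\bigr)$, which by the definition~(\ref{defn:D}) (with $n$ replaced by $n-1$) equals $(n-1)D_n(x,y)$. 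This yields (\ref{CDstep1}).

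For the second step, I rewrite (\ref{CDstep1}) as
\begin{equation*}
  (x-y)(2n+1)I_n(x)I_n(y)=(n+2)D_{n+1}(x,y)-(n-1)D_n(x,y)
\end{equation*}
and sum from $n=1$ to $n=N$. After the index shift $m=n+1$ in the first resulting sum, the right-hand side becomes
\begin{equation*}
  \sum_{m=2}^{N+1}(m+1)D_m(x,y)-\sum_{n=1}^N(n-1)D_n(x,y)
  =(N+2)D_{N+1}(x,y)+\sum_{n=2}^N\bigl((n+1)-(n-1)\bigr)D_n(x,y)\;,
\end{equation*}
where the $n=1$ term of the second sum vanishes. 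The coefficient collapses to the constant $2$, so the right-hand side equals
\begin{equation*}
  (N+2)D_{N+1}(x,y)+2\sum_{n=2}^N D_n(x,y)
  =ND_{N+1}(x,y)+2\sum_{n=1}^N D_{n+1}(x,y)\;,
\end{equation*}
which is exactly (\ref{CDformula}).

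There is no real obstacle here: both identities reduce to short algebraic manipulations. The only mild subtlety worth noting is that the recursion of Lemma~\ref{lem:rec4I} holds uniformly across all integers $n\in\Z$ without any exceptional case at $n=0$ thanks to the extended definition of $I_n$, which is precisely what allows (\ref{CDstep1}) to be stated cleanly for all $n\in\Z$ and ensures that the telescoping argument encounters no boundary anomaly at $n=1$.
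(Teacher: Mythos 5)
Your proof is correct and follows essentially the same route as the paper: identity~(\ref{CDstep1}) is obtained by applying Lemma~\ref{lem:rec4I} at $x$ and $y$ and taking the antisymmetric combination, and (\ref{CDformula}) follows by summing and reindexing exactly as in the paper's telescoping argument. Nothing is missing.
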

\begin{proof}
  Following~\cite[Proof of Theorem~3.2.2]{gabor}, we use to recursion
  formula in Lemma~\ref{lem:rec4I} to deduce
  \begin{align*}
    &(n+2)\left(I_{n+1}(x)I_n(y)-I_n(x)I_{n+1}(y)\right)\\
    &\qquad=\left((2n+1)xI_n(x)-(n-1)I_{n-1}(x)\right)I_n(y)
      -I_n(x)\left((2n+1)yI_n(y)-(n-1)I_{n-1}(y)\right)\\
    &\qquad=(x-y)(2n+1)I_n(x)I_n(y)
      +(n-1)\left(I_n(x)I_{n-1}(y)-I_{n-1}(x)I_n(y)\right)\;,
  \end{align*}
  which establishes~(\ref{CDstep1}). Applying this identity, we
  further obtain
  \begin{align*}
    (x-y)\sum_{n=1}^N(2n+1)I_n(x)I_n(y)
    &=\sum_{n=1}^N(n+2)D_{n+1}(x,y)-\sum_{n=1}^N(n-1)D_n(x,y)\\
    &=(N+2)D_{N+1}(x,y)+\sum_{n=2}^N(n+1)D_n(x,y)-\sum_{n=2}^N(n-1)D_n(x,y)\\
    &=ND_{N+1}(x,y)+2\sum_{n=1}^ND_{n+1}(x,y)\;,
  \end{align*}
  as claimed.
\end{proof}
This Christoffel--Darboux type formula enters our analysis in the
proof of the following lemma.
\begin{lemma}\label{lem:offdiagconv}
  Fix $x,y\in[-1,1]$. Then, for all $\alpha\in\R$ with $\alpha<1$,
  we have
  \begin{equation*}
    \lim_{N\to\infty}(x-y)N^{\alpha+1}
    \sum_{n=N}^\infty(2n+1)I_n(x)I_n(y)=0\;.
  \end{equation*}
\end{lemma}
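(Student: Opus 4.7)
The plan is to base the proof on the Christoffel--Darboux type formula (\ref{CDformula}) of Proposition~\ref{propn:chris4I} together with the Darboux asymptotics (\ref{intasymp}) for $P_n^{(-1,-1)}$ and the identification (\ref{I2jacobi}). First, I would dispose of the degenerate situations. If $x=y$, then the prefactor $x-y$ vanishes, so the expression is identically zero provided the tail sum is at worst polynomially bounded, which is clear from the crude estimate $(2n+1)(I_n(x))^2 = O(n^{-2})$ on $(-1,1)$ and from $I_n(\pm 1)=0$ via (\ref{Iboundary}). If $x\in\{-1,1\}$ or $y\in\{-1,1\}$, then again (\ref{Iboundary}) gives $I_n(x)I_n(y)=0$ for all $n\in\N$ and the claim is trivial. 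So assume henceforth that $x\neq y$ with $x,y\in(-1,1)$.

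Next, using~(\ref{I2jacobi}) to write $I_n(x)=\tfrac{2}{n}P_{n+1}^{(-1,-1)}(x)$ and applying the Darboux asymptotic~(\ref{intasymp}) at the fixed interior points $x$ and $y$, I would establish the pointwise bound
\begin{equation*}
  I_n(x)=O\bigl(n^{-3/2}\bigr)\quad\text{as }n\to\infty\;,
\end{equation*}
and likewise for $y$. Consequently, from the definition~(\ref{defn:D}),
\begin{equation*}
  D_{n+1}(x,y)=I_{n+1}(x)I_n(y)-I_n(x)I_{n+1}(y)=O\bigl(n^{-3}\bigr)\;.
\end{equation*}

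The heart of the argument is now~(\ref{CDformula}). Writing the identity both at level $M$ and at level $N-1$ and subtracting gives, for $M\geq N$,
\begin{equation*}
  (x-y)\sum_{n=N}^{M}(2n+1)I_n(x)I_n(y)
  =MD_{M+1}(x,y)-(N-1)D_N(x,y)+2\sum_{n=N}^{M}D_{n+1}(x,y)\;.
\end{equation*}
Because $D_{n+1}(x,y)=O(n^{-3})$, the series $\sum_{n=1}^\infty D_{n+1}(x,y)$ converges absolutely, and $MD_{M+1}(x,y)=O(M^{-2})\to 0$ as $M\to\infty$. Letting $M\to\infty$ yields
\begin{equation*}
  (x-y)\sum_{n=N}^{\infty}(2n+1)I_n(x)I_n(y)
  =-(N-1)D_N(x,y)+2\sum_{n=N}^{\infty}D_{n+1}(x,y)=O\bigl(N^{-2}\bigr)\;,
\end{equation*}
where the tail estimate follows from comparison with $\sum n^{-3}$. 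Multiplication by $N^{\alpha+1}$ gives a quantity of size $O(N^{\alpha-1})$, which tends to zero precisely because $\alpha<1$.

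The only conceptual obstacle is the step where the crude estimate $(2n+1)I_n(x)I_n(y)=O(n^{-2})$, with a tail only of order $N^{-1}$, is too weak to absorb the factor $N^{\alpha+1}$ once $\alpha\geq 0$; this is exactly why the Christoffel--Darboux type formula~(\ref{CDformula}) has to be invoked, as it provides the cancellation required to replace the summand $(2n+1)I_n(x)I_n(y)$ of order $n^{-2}$ by the summable $D_{n+1}(x,y)$ of order $n^{-3}$ together with a boundary contribution of order $N^{-2}$.
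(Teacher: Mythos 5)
Your argument is correct and follows essentially the same route as the paper's proof: the trivial boundary cases via $I_n(\pm 1)=0$, the Darboux asymptotic~(\ref{intasymp}) combined with~(\ref{I2jacobi}) to get $I_n=O(n^{-3/2})$ and hence $D_{n+1}=O(n^{-3})$, and the truncated Christoffel--Darboux identity~(\ref{cut_CD}) with $M\to\infty$ to obtain an $O(N^{\alpha-1})$ bound. The only cosmetic difference is your separate (harmless) treatment of the case $x=y$, which the paper's estimates cover automatically.
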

\begin{proof}
  The result is trivially true if $x\in\{-1,1\}$ or $y\in\{-1,1\}$
  because $I_n(-1)=I_n(1)=0$ for all $n\in\N$, cf.~(\ref{Iboundary}).
  Let us now suppose that $x,y\in(-1,1)$ and choose
  $N,M\in\N$ with $N<M$. From Proposition~\ref{propn:chris4I}, it
  follows that
  \begin{equation}\label{cut_CD}
    (x-y)\sum_{n=N}^M(2n+1)I_n(x)I_n(y)
    =MD_{M+1}(x,y)+2\sum_{n=N}^MD_{n+1}(x,y)-(N-1)D_N(x,y)\;.
  \end{equation}
  The asymptotic behaviour~(\ref{intasymp}) given by the Darboux formula
  implies that there exists a positive constant $K\in\R$,
  depending on $x$ and $y$, such that, for all $n$ sufficiently large,
  \begin{equation*}
    \left|P_n^{(-1,-1)}(x)\right|\leq \frac{K}{2}n^{-\frac{1}{2}}
    \quad\mbox{and}\quad
    \left|P_n^{(-1,-1)}(y)\right|\leq \frac{K}{2}n^{-\frac{1}{2}}\;.
  \end{equation*}
  Since the Jacobi polynomial $P_{n+1}^{(-1,-1)}$ and
  the integral $I_n$ are
  related by $P_{n+1}^{(-1,-1)}=\frac{1}{2}n I_n$ for $n\in\N$,
  see~(\ref{I2jacobi}), we obtain that, for $n$ large enough,
  \begin{equation*}
    \left|I_n(x)\right|\leq K n^{-\frac{3}{2}}
    \quad\mbox{and}\quad
    \left|I_n(y)\right|\leq K n^{-\frac{3}{2}}\;.
  \end{equation*}
  From the definition~(\ref{defn:D}) of $D_{n+1}$ we deduce that,
  for $n$ sufficiently large,
  \begin{equation}\label{Destimate}
    \left|D_{n+1}(x,y)\right|\leq 2K^2n^{-3}\;.
  \end{equation}
  In particular, this shows
  \begin{equation*}
    \lim_{M\to\infty}MD_{M+1}(x,y)=0\;,
  \end{equation*}
  and, by the integral test, that, for $N$ large enough,
  \begin{equation*}
    \left|\sum_{n=N}^\infty D_{n+1}(x,y)\right|
    \leq 2K^2\sum_{n=N}^\infty\frac{1}{n^3}
    \leq 2K^2\left(\frac{1}{N^3}+\int_N^\infty z^{-3}\dd z\right)
    =\frac{2K^2}{N^3}+\frac{K^2}{N^2}\;.
  \end{equation*}
  By~(\ref{cut_CD}), these estimates establish
  \begin{equation*}
    (x-y)\sum_{n=N}^\infty(2n+1)I_n(x)I_n(y)
    =2\sum_{n=N}^\infty D_{n+1}(x,y)-(N-1)D_N(x,y)
  \end{equation*}
  as well as
  \begin{equation*}
    \left|(x-y)N^{\alpha+1}\sum_{n=N}^\infty(2n+1)I_n(x)I_n(y)\right|
    \leq 4K^2N^{\alpha-2}+2K^2N^{\alpha-1}
    +N^{\alpha+1}(N-1)\left|D_N(x,y)\right|\;.
  \end{equation*}
  Provided that $\alpha<1$, we have $N^{\alpha-1}\to 0$ and
  $N^{\alpha-2}\to 0$ as $N\to\infty$, and since~(\ref{Destimate})
  further yields
  \begin{equation*}
    \lim_{N\to\infty}N^{\alpha+1}(N-1) D_N(x,y)=0
    \quad\mbox{for }\alpha<1\;,
  \end{equation*}
  the claimed result follows.
\end{proof}
The reason why the Christoffel--Darboux type formula~(\ref{CDformula})
allows us to prove Lemma~\ref{lem:offdiagconv} is that as argued in
the above proof, the asymptotic~(\ref{intasymp}) implies that
$D_{n+1}(x,y)$ is of order $O(n^{-3})$ as $n\to\infty$, whereas
$(2n+1)I_n(x)I_n(y)$ is only seen to be of order $O(n^{-2})$
as $n\to\infty$.

We use Lemma~\ref{lem:offdiagconv} in the proof of
Theorem~\ref{thm:CLT4P} to show the convergence away from the
diagonal, while the following lemma provides what is needed to establish
locally uniform convergence on the diagonal. The convergence
of moments, cf. Proposition~\ref{propn:moments}, then characterises the
limit uniquely.
\begin{lemma}\label{lem:locunif_bound}
  Fix $\eps>0$. The families
  \begin{equation*}
    \left\{
      N\sum_{n=N}^\infty(2n+1)I_n(x)I_n(y)\colon
      N\in\N\mbox{ and }x,y\in[-1+\eps,1-\eps]
    \right\}
  \end{equation*}
  and
  \begin{equation*}
    \left\{
      (N+1) P_N(x)P_{N+1}(x)\colon N\in\N\mbox{ and }x\in[-1+\eps,1-\eps]
    \right\}
  \end{equation*}
  are uniformly bounded.
\end{lemma}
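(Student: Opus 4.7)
The plan is to derive uniform pointwise bounds $|P_n(x)| \leq C_1 n^{-1/2}$ and $|I_n(x)| \leq C_2 n^{-3/2}$ valid for all $n \in \N$ and all $x \in [-1+\eps, 1-\eps]$, and then to read off the two uniform bounds claimed in the lemma by elementary term-wise estimation.

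First I would parameterise $x = \cos\theta$ so that, as $x$ ranges over $[-1+\eps, 1-\eps]$, the angle $\theta$ stays in a compact subinterval $[\delta, \pi-\delta]$ of $(0,\pi)$ with $\delta = \delta(\eps) > 0$. On this subinterval the prefactor $k(\theta)$ from Theorem~\ref{thm:darboux} is uniformly bounded, so the asymptotic~(\ref{legasymp}) yields $|P_n(x)| \leq C_1 n^{-1/2}$ for all $n$ sufficiently large, uniformly in $x$; the finitely many small-$n$ cases are absorbed into the constant by using the continuity of each $P_n$ on $[-1,1]$. Applying the same reasoning to~(\ref{intasymp}), together with the identity $P_{n+1}^{(-1,-1)}(x) = \frac{1}{2} n I_n(x)$ from~(\ref{I2jacobi}), produces the bound $|I_n(x)| \leq C_2 n^{-3/2}$, again uniformly in $n \in \N$ and $x \in [-1+\eps, 1-\eps]$.

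Armed with these two pointwise bounds, the conclusions follow directly. For the Legendre family I would simply estimate
\begin{equation*}
  (N+1) |P_N(x) P_{N+1}(x)| \leq C_1^2 \frac{N+1}{\sqrt{N(N+1)}} \leq 2 C_1^2\;,
\end{equation*}
which is uniform in $N \in \N$ and $x \in [-1+\eps, 1-\eps]$. For the tail-sum family, the pointwise bound gives $|(2n+1) I_n(x) I_n(y)| \leq 3 C_2^2 n^{-2}$, so by the integral test $\sum_{n=N}^\infty (2n+1) I_n(x) I_n(y)$ is $O(N^{-1})$ uniformly in $x, y$, and multiplying by $N$ produces the uniform bound.

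The only real point requiring care is to verify that the error term in the Darboux formula is genuinely uniform in $\theta$ over the compact subinterval $[\delta, \pi-\delta]$, but this is precisely the uniformity clause in the statement of Theorem~\ref{thm:darboux}, so no difficulty arises. Handling the finitely many low-degree indices is a minor technicality since each $P_n$ and $I_n$ is a continuous function on the compact set $[-1,1]$.
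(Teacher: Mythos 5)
Your proposal is correct and follows essentially the same route as the paper: uniform bounds $|P_n(x)|\leq C_1 n^{-1/2}$ and $|I_n(x)|\leq C_2 n^{-3/2}$ on $[-1+\eps,1-\eps]$ obtained from the Darboux formula via~(\ref{legasymp}), (\ref{intasymp}) and the identity~(\ref{I2jacobi}), followed by term-wise estimation and the integral test for the tail sum. The only cosmetic difference is that you make explicit the absorption of the finitely many low indices into the constants, which the paper leaves implicit.
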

\begin{proof}
  As a consequence of the estimate~(\ref{intasymp}) from the
  Darboux formula, there exists a positive constant $K\in\R$ such
  that, for $n$ sufficiently large, we have
  \begin{equation*}
    \left|P_n^{(-1,-1)}(x)\right|\leq \frac{K}{2}n^{-\frac{1}{2}}
    \quad\mbox{uniformly in }x\in[-1+\eps,1-\eps]\;.
  \end{equation*}
  Due to the relation $P_{n+1}^{(-1,-1)}=\frac{1}{2}n I_n$ for
  $n\in\N$, this implies that, for $n$ large enough,
  \begin{equation*}
    \left|I_n(x)\right|\leq K n^{-\frac{3}{2}}
    \quad\mbox{uniformly in }x\in[-1+\eps,1-\eps]\;.
  \end{equation*}
  We deduce that, uniformly in $x,y\in[-1+\eps,1-\eps]$ and for $N$
  sufficiently large,
  \begin{equation*}
    \left|N\sum_{n=N}^\infty(2n+1)I_n(x)I_n(y)\right|
    \leq 3NK^2\sum_{n=N}^\infty\frac{1}{n^2}
    \leq 3NK^2\left(\frac{1}{N^2}+\int_N^\infty z^{-2}\dd z\right)
    \leq 6K^2\;,
  \end{equation*}
  which establishes the uniform boundedness of the first family. We
  argue in a similar way for the second family. By the
  asymptotic~(\ref{legasymp}) obtained from the Darboux formula and
  since $P_n^{(0,0)}=P_n$ for $n\in\N_0$, see~(\ref{L2jacobi}), there
  exists a positive constant $L\in\R$ such that,
  for $N$ sufficiently large,
  \begin{equation*}
    \left|P_N(x)\right|\leq L N^{-\frac{1}{2}}
    \quad\mbox{uniformly in }x\in[-1+\eps,1-\eps]\;.
  \end{equation*}
  Thus, for $N$ large enough,
  \begin{equation*}
    \left|(N+1) P_N(x)P_{N+1}(x)\right| \leq 2L^2
    \quad\mbox{uniformly in }x\in[-1+\eps,1-\eps]\;,
  \end{equation*}
  and the uniform boundedness of the second family follows.
\end{proof}
We finally combine our results to give the proof of
Theorem~\ref{thm:CLT4P}.
\begin{proof}[Proof of Theorem~\ref{thm:CLT4P}]
  As argued for the shifted Legendre polynomials
  in the proof of Theorem~\ref{thm:LLN} in
  Section~\ref{kol_loops}, the
  polarised Parseval identity shows that, for $x,y\in[-1,1]$,
  \begin{equation*}
    \min(1+x,1+y) =\sum_{n=0}^\infty
    \frac{2n+1}{2}\int_{-1}^xP_n(z)\dd z\int_{-1}^yP_n(z)\dd z\;.
  \end{equation*}
  Therefore, $R_N(x,y)$ defined by~(\ref{defn:R}) can be expressed as,
  for $N\in\N$,
  \begin{equation}\label{Rinfinitesum}
    R_N(x,y) =N \sum_{n=N}^\infty
    \frac{2n+1}{2}\int_{-1}^xP_n(z)\dd z\int_{-1}^yP_n(z)\dd z
    =\frac{1}{2}N\sum_{n=N}^\infty(2n+1)I_n(x)I_n(y)\;.
  \end{equation}
  Hence, if $x,y\in[-1,1]$ with $x\not=y$ then
  Lemma~\ref{lem:offdiagconv} applied for
  $\alpha=0$ implies
  that $R_N(x,y)\to 0$ as $N\to\infty$, which establishes the desired
  convergence away from the diagonal. It remains to consider the
  diagonal case $x=y$. As in Section~\ref{moments}, we consider the
  functions $S_N\colon[-1,1]\to\R$ defined by
  \begin{equation*}
    S_N(x)=R_N(x,x)
    \quad\mbox{for }x\in[-1,1]\;.
  \end{equation*}
  Using the expression for $S_N$, which follows from~(\ref{defn:R})
  for $R_N$, that
  \begin{equation*}
    S_N(x)=N\left(1+x-\frac{1}{2}(1+x)^2
      -\sum_{n=1}^{N-1}\frac{2n+1}{2}
      \left(\int_{-1}^xP_n(z)\dd z\right)^2\right)\;,
  \end{equation*}
  and the relation~(\ref{LegInt}), we compute
  \begin{align*}
    \frac{\db}{\db x}S_N(x)
    &=N\left(1-(1+x)-\sum_{n=1}^{N-1}(2n+1)P_n(x)\int_{-1}^xP_n(z)\dd z\right)\\
    &=-N\left(x
      +\sum_{n=1}^{N-1}P_n(x)\left(P_{n+1}(x)-P_{n-1}(x)\right)\right)\\
    &=-N P_{N-1}(x)P_N(x)\;.
  \end{align*}
  By Lemma~\ref{lem:locunif_bound}, it follows that
  the sequence $(S_N)_{N\in\N}$ is uniformly bounded and uniformly
  Lipschitz on $[-1+\eps,1-\eps]$ for $\eps>0$. The Arzel{\`a}--Ascoli
  theorem implies that $(S_N)_{N\in\N}$ is locally uniformly
  convergent on $(-1,1)$ and we deduce that $(S_N)_{N\in\N}$ converges to a
  continuous function on $(-1,1)$.   
  Thus, the limit function is uniquely identified by
  Proposition~\ref{propn:moments} and since
  $S_N(-1)=S_N(1)=0$ for all $N\in\N$, we conclude that, for all
  $x\in[-1,1]$,
  \begin{equation*}
    \lim_{N\to\infty}R_N(x,x)=\lim_{N\to\infty}S_N(x)
    =S(x)=\frac{1}{\pi}\sqrt{1-x^2}\;,
  \end{equation*}
  as required.
\end{proof}
We obtain Theorem~\ref{thm:CLT}
as a consequence of Theorem~\ref{thm:CLT4P}.
\begin{proof}[Proof of Theorem~\ref{thm:CLT}]
  As established in the proof of Lemma~\ref{lem:covariance},
  the covariance function of the process $(L_t^N)_{t\in[0,1]}$ is
  $C_N$ and hence, the fluctuation process $(F_t^N)_{t\in[0,1]}$
  defined by $F_t^N=\sqrt{N}L_t^N$ has covariance $NC_N$.
  Moreover, for $s,t\in[0,1]$, we have
  \begin{align}\label{R2C}
    \begin{aligned}
      R_N(2s-1,2t-1)
      &=2N\left(\min(s,t)-\sum_{n=0}^{N-1}\left(2n+1\right)
        \int_{0}^sQ_n(r)\dd r
        \int_{0}^tQ_n(r)\dd r\right)\\
      &=2NC_N(s,t)\;.
    \end{aligned}
  \end{align}
  By Theorem~\ref{thm:CLT4P}, it follows that, for $s,t\in[0,1]$ fixed,
  \begin{equation*}
    \lim_{N\to\infty}NC_N(s,t)
    =\frac{1}{2}\lim_{N\to\infty}R_N(2s-1,2t-1)=
    \begin{cases}
      \frac{1}{\pi}\sqrt{t(1-t)} & \mbox{if } s=t\\
      0 & \mbox{if } s\not=t
    \end{cases}\;.
  \end{equation*}
  Thus, for any $k\in\N$ and any $t_1,\dots,t_k\in[0,1]$, the
  characteristic functions of the Gaussian random vectors
  $(F_{t_1}^N,\dots,F_{t_k}^N)$ converge pointwise as $N\to\infty$
  to the characteristic function of 
  the Gaussian random vector $(F_{t_1},\dots,F_{t_k})$. By
  L{\'e}vy's continuity theorem, this implies the claimed convergence in
  finite dimensional distributions.
\end{proof}
We close with the observation that a slightly modified
analysis even allows us to deduce a non-trivial
bound on the scale of the decorrelation.
\begin{remark}\label{rem:decorr}
  Fix $x\in(-1,1)$. For $\beta\in\R$ with $\beta>0$
  and $y\in\R\setminus\{0\}$, set
  \begin{equation*}
    y_N=x+N^{-\beta}y\;.
  \end{equation*}
  We note that the sequence
  $(y_N)_{N\in\N}$ converges monotonically to $x$ as $N\to\infty$.
  Since the asymptotic estimate~(\ref{intasymp})
  is uniform in
  $\theta\in[\eps,\pi-\eps]$ for $\eps>0$, the argument presented
  to prove Lemma~\ref{lem:offdiagconv} can be improved to show
  the existence of a positive constant $K\in\R$ such that,
  for $n$ and $N$ large enough,
  \begin{equation*}
    \left|D_{n+1}(x,y_N)\right|\leq 2K^2n^{-3}\;.
  \end{equation*}
  As in the proof of Lemma~\ref{lem:offdiagconv}, this implies
  \begin{equation*}
    \left|(x-y_N)N\sum_{n=N}^\infty(2n+1)I_n(x)I_n(y_N)\right|
    \leq \frac{4K^2}{N^2}+\frac{4K^2}{N}
    \leq \frac{8K^2}{N}\;,
  \end{equation*}
  and therefore, according to~(\ref{Rinfinitesum}), that
  \begin{equation*}
    \left|R_N(x,y_N)\right|
    \leq\frac{4K^2}{\left|x-y_N\right|N}
    =\frac{4K^2}{|y|}N^{\beta-1}\;.
  \end{equation*}
  Hence, as long as $\beta<1$, we are guaranteed that
  \begin{equation*}
    \lim_{N\to\infty}R_N(x,y_N)=0\;.
  \end{equation*}
  Due to~(\ref{R2C}), this rewrites
  in terms of the covariance function $C_N$,
  for $s\in(0,1)$ and $t\in\R\setminus\{0\}$, as
  \begin{equation*}
    \lim_{N\to\infty}N C_N\left(s,s+N^{-\beta}t\right)
    =\frac{1}{2}\lim_{N\to\infty}R_N(2s-1,2s-1+N^{-\beta}2t)
    =0
    \quad\mbox{for }\beta<1\;,
  \end{equation*}
  which provides a bound on the decorrelation scale for the fluctuation
  processes $(F_t^N)_{t\in[0,1]}$.
\end{remark}
\appendix
\section{Recurrence relations for the
  partial fraction coefficients}
\label{app:recurrence}
We continue the analysis started in the proof of
Proposition~\ref{propn:PFD} to determine recurrence relations for the
coefficients $b_{a,k}^l$ and to include them for completeness. Due
to the symmetry property~(\ref{PFD:sym}) of
Proposition~\ref{propn:PFD}, we restrict our attention to the family
$\{b_{a,k}^l\in\R\colon a,k,l\in\N_0\}$. As discussed when settling the
base case for the inductive proof of Proposition~\ref{propn:PFD}, we
have, cf.~(\ref{coeff4k0}),
\begin{equation}\label{rec_initial}
  b_{0,0}^0=1\;,
  \enspace
  b_{1,0}^0=-\frac{1}{2}
  \quad\mbox{and}\quad
  b_{a,0}^l=0
  \mbox{ otherwise}\;.
\end{equation}
These are the initial conditions for our recursion. The recurrence
relations for the coefficients $b_{a,k}^l$
are deduced, by use of the Heaviside cover-up method, from the
expression, for $a\in\N_0$,
\begin{align}\label{app:mom_rec}
  \begin{aligned}
    m_{n-a,n+a}^{k}
    &=\frac{(2n+1)(n-a+2)(n+a+2)}{(2n-2a+1)(2n+2a+1)(2n+3)}
    \left(\sum_{l=0}^{k-1}\frac{b_{a,k-1}^l}{2n-2l+1}-
      \sum_{l=0}^{k-1}\frac{b_{a,k-1}^l}{2n+2l+5}\right)\\
    &\quad\enspace+
    \frac{(2n+1)(n-a-1)(n+a-1)}{(2n-2a+1)(2n+2a+1)(2n-1)}
    \left(\sum_{l=0}^{k-1}\frac{b_{a,k-1}^l}{2n-2l-3}-
      \sum_{l=0}^{k-1}\frac{b_{a,k-1}^l}{2n+2l+1}\right)\\
    &\quad\enspace+
    \frac{(n-a+2)(n+a-1)}{(2n-2a+1)(2n+2a+1)}
    \left(\sum_{l=0}^{k-1}\frac{b_{a-1,k-1}^l}{2n-2l-1}-
      \sum_{l=0}^{k-1}\frac{b_{a-1,k-1}^l}{2n+2l+3}\right)\\
    &\quad\enspace+
    \frac{(n-a-1)(n+a+2)}{(2n-2a+1)(2n+2a+1)}
    \left(\sum_{l=0}^{k-1}\frac{b_{a+1,k-1}^l}{2n-2l-1}-
      \sum_{l=0}^{k-1}\frac{b_{a+1,k-1}^l}{2n+2l+3}\right)\;,
  \end{aligned}
\end{align}
which is a consequence of Lemma~\ref{lem:mom_rec} applied with $p=n-a$
and $q=n+a$ and Proposition~\ref{propn:PFD}. When
employing the Heaviside cover-up method, we need to be careful
about factors which could occur quadratically in the denominators.
By the partial fraction decomposition~(\ref{PFD}) for
$m_{n-a,n+a}^{k}$ we are justified to write
\begin{equation}\label{babuse}
  b_{a,k}^l=\left.(2n-2l-1)m_{n-a,n+a}^k\right|_{n=l+1/2}\;.
\end{equation}
If $l\not=0$ and $l\not=a-1$, it follows from~(\ref{app:mom_rec})
and~(\ref{babuse}) that, for $k\in\N$,
\begin{align*}
  b_{a,k}^l
  &=\frac{(l+1)(2l-2a+5)(2l+2a+5)}{16(l+2)(l-a+1)(l+a+1)}b_{a,k-1}^{l+1}+
    \frac{(l+1)(2l-2a-1)(2l+2a-1)}{16l(l-a+1)(l+a+1)}b_{a,k-1}^{l-1}\\
  &\qquad+
    \frac{(2l-2a+5)(2l+2a-1)}{16(l-a+1)(l+a+1)}b_{a-1,k-1}^l+
    \frac{(2l-2a-1)(2l+2a+5)}{16(l-a+1)(l+a+1)}b_{a+1,k-1}^l\;.
\end{align*}
For $l=0$, we need to treat the two cases $a=1$ and $a\not=1$
separately. If $a\not=1$, we obtain in the same way as before that
\begin{align}\label{recbak0}
  \begin{aligned}
    b_{a,k}^0
    &=\frac{(2a-5)(2a+5)}{32(a-1)(a+1)}b_{a,k-1}^1-
    \frac{(2a+1)(2a-1)}{8(a-1)(a+1)}\sum_{l=0}^{k-1}\frac{b_{a,k-1}^l}{l+1}\\
    &\qquad+
    \frac{(2a-5)(2a-1)}{16(a-1)(a+1)}b_{a-1,k-1}^0+
    \frac{(2a+1)(2a+5)}{16(a-1)(a+1)}b_{a+1,k-1}^0\;,
  \end{aligned}
\end{align}
which for $a=0$ reduces to
\begin{equation*}
  b_{0,k}^0=\frac{25}{32}b_{0,k-1}^1
  -\frac{1}{8}\sum_{l=0}^{k-1}\frac{b_{0,k-1}^l}{l+1}
  -\frac{5}{8}b_{1,k-1}^0\;.
\end{equation*}
If $a=1$, we use the property~(\ref{no_blow_up})
established as part of the proof of Proposition~\ref{propn:PFD}
to show that the blow-up term $(2n-1)^{-1}$ appearing in the
Heaviside cover-up method vanishes, and we deduce
\begin{align*}
  b_{1,k}^0
  &=\frac{21}{32}\left(\frac{b_{1,k-1}^0}{3}-\frac{b_{1,k-1}^1}{8}
    -\sum_{l=2}^{k-1}\frac{(l+1)b_{1,k-1}^l}{(l-1)(l+3)}\right)\\
  &\qquad-
    \frac{3}{16}\left(\frac{b_{0,k-1}^0}{4}
    +\sum_{l=1}^{k-1}\frac{(l+1)b_{0,k-1}^l}{l(l+2)}\right)+
    \frac{21}{16}
    \left(\frac{b_{2,k-1}^0}{4}
    +\sum_{l=1}^{k-1}\frac{(l+1)b_{2,k-1}^l}{l(l+2)}\right)\;.
\end{align*}
It remains to consider the case $l=a-1$ for $a\geq 2$. As above, we
use the property~(\ref{no_blow_up}) to show that the potential blow-up
term $(2n-2a+1)^{-1}$ vanishes, and we derive
\begin{align*}
  b_{a,k}^{a-1}
  &=-\frac{3(4a+3)}{16(a+1)}
    \left(\frac{b_{a,k-1}^a}{4(a+1)}+
    \sum_{l=0,l\not=a}^{k-1}\frac{(l+1)b_{a,k-1}^l}{(l-a)(l+a+2)}\right)\\
  &\qquad+
    \frac{3(4a-3)}{16(a-1)}
    \left(\frac{b_{a,k-1}^{a-2}}{4(a-1)}+
    \sum_{l=0,l\not=a-2}^{k-1}\frac{(l+1)b_{a,k-1}^l}{(l+a)(l-a+2)}\right)\\
  &\qquad-
    \frac{3(4a-3)}{16a}
    \left(\frac{b_{a-1,k-1}^{a-1}}{4a}+
    \sum_{l=0,l\not=a-1}^{k-1}\frac{(l+1)b_{a-1,k-1}^l}{(l-a+1)(l+a+1)}\right)\\
  &\qquad+
    \frac{3(4a+3)}{16a}
    \left(\frac{b_{a+1,k-1}^{a-1}}{4a}+
    \sum_{l=0,l\not=a-1}^{k-1}\frac{(l+1)b_{a+1,k-1}^l}{(l-a+1)(l+a+1)}\right)\;.
\end{align*}
While especially the recurrence relations for the cases $l=0$ and
$l=a-1$ are not particularly nice, we have enough relations
to uniquely
determine the family $\{b_{a,k}^l\in\R\colon a,k,l\in\N_0\}$ of
coefficients from~(\ref{rec_initial}) by recursion over $k\in\N_0$. It
is even possible to use these recurrence relations for the coefficients
$b_{a,k}^l$ and~(\ref{no_blow_up})
to prove the recurrence relation for the sums $B_{a,k}$
given in Proposition~\ref{propn:rec_rel4B} by brute force.
However, this approach needs a lot of care and is less elegant.
Though, it could be of interest to investigate if the above recurrence
relations could be significantly simplified. For instance, we note
that according to~(\ref{PFD:bound}) of Proposition~\ref{propn:PFD},
for $a\in\N$,
\begin{equation*}
  \sum_{l=0}^{k-1}\frac{b_{a,k-1}^l}{l+1}=-\frac{b_{0,k-1}^{a-1}}{2a}\;,
\end{equation*}
which implies that~(\ref{recbak0}) for $a\geq 2$ is equivalent to
\begin{align*}\label{recbak0}
  b_{a,k}^0
  &=\frac{(2a-5)(2a+5)}{32(a-1)(a+1)}b_{a,k-1}^1+
    \frac{(2a+1)(2a-1)}{16a(a-1)(a+1)}b_{0,k-1}^{a-1}\\
  &\qquad+
    \frac{(2a-5)(2a-1)}{16(a-1)(a+1)}b_{a-1,k-1}^0+
    \frac{(2a+1)(2a+5)}{16(a-1)(a+1)}b_{a+1,k-1}^0\;.
\end{align*}
We close by remarking that the coefficients $b_{a,k}^l$ can be easily
generated using {\sc Mathematica} by assigning the
appropriate values to {\tt a} and {\tt k}, calling the command
\begin{verbatim}
  Apart[FindSequenceFunction[Table[
    (2n+1)*Integrate[x^(2k)*Integrate[LegendreP[n-a,z],{z,-1,x}]*
      Integrate[LegendreP[n+a,z],{z,-1,x}],{x,-1,1}],{n,a+1,a+20}],n-a]]
\end{verbatim}
and reading off the coefficients
$b_{a,k}^l$ for $l\in\{0,\dots,k\}$,
where the upper bound of {\tt n} needs
to be increased for large values of {\tt k}.
\bibliographystyle{plain}
\bibliography{references}

\end{document}